\newtheorem{theorem}{Theorem}[section]
\newtheorem{proposition}[theorem]{Proposition}
\newtheorem{definition}[theorem]{Definition}
\newtheorem{lemma}[theorem]{Lemma}
\newtheorem{corollary}[theorem]{Corollary}
\newtheorem{remark}[theorem]{Remark}
\newtheorem{example}[theorem]{Example}
\newtheorem{hypothesis}[theorem]{Hypothesis}
\newtheorem*{acknowledgement}{Acknowledgement}
\newcommand{\iot}{\iota(\theta)}
\newcommand{\R}{\mathbb{R}}
\newcommand{\QQ}{\mathcal{Q}}
\newcommand{\PP}{\mathcal{P}}
\newcommand{\C}{\mathbb{C}}
\newcommand{\G}{\mathcal{G}}
\newcommand{\Z}{\mathbb{Z}}
\newcommand{\D}{\mathcal{D}}
\newcommand{\DR}{D_{\mathrm{R}}}
\newcommand{\twist}{\mathcal{I}(\theta)}
\newcommand{\V}{\mathcal{V}}
\newcommand{\M}{\mathcal{M}}
\newcommand{\g}{\mathfrak{g}}
\newcommand{\id}{\mathrm{id}}
\newcommand{\F}{\mathcal{S}_{2n}}
\newcommand{\OO}{\mathcal{O}}
\newcommand{\COO}[1]{\overline{\mathcal{O}_{#1}}}
\newcommand{\ontop}[2]{\genfrac{}{}{0pt}{}{#1}{#2}}
\newcommand{\BG}{\mathrm{BG}}
\newcommand{\br}{\mathrm{Br}}
\newcommand{\bg}{\BG}
\newcommand{\fix}{F_{2n}}
\newcommand{\out}{\mathrm{out}}
\newcommand{\supp}{\mathrm{supp}}
\newcommand{\SL}{\mathrm{SL}}
\newcommand{\Sp}{\mathrm{Sp}}
\begin{document}
\author{Axel Hultman}
\address{Department of Mathematics, KTH-Royal Institute of Technology,
  SE-100 44, Stockholm, Sweden.}
\email{axel@math.kth.se}

\title[Criteria for rational smoothness]{Criteria for rational smoothness of some symmetric orbit closures}

\begin{abstract}
Let $G$ be a connected reductive linear algebraic group over $\C$ with
an involution $\theta$. Denote by $K$ the subgroup of fixed points. In
certain cases, the $K$-orbits in the flag variety $G/B$ are indexed by
the twisted identities $\iot = \{\theta(w^{-1})w\mid w\in W\}$ in the
Weyl group $W$. Under this assumption, we establish a criterion for
rational smoothness of orbit closures which generalises classical
results of Carrell and Peterson for Schubert varieties. That is, whether an
orbit closure is rationally smooth at a given point can be determined
by examining the degrees in a ``Bruhat
graph'' whose vertices form a subset of $\iot$. Moreover, an orbit
closure is rationally smooth everywhere if and only if its
corresponding interval in the Bruhat order on $\iot$ is rank symmetric.

In the special case $K=\Sp_{2n}(\C)$, $G=\SL_{2n}(\C)$, we strengthen
our criterion by showing that only the degree
of a single vertex, the ``bottom one'', needs to be examined. This
generalises a result of Deodhar for type $A$ Schubert varieties.

\end{abstract}

\maketitle

\section{Introduction}
Let $G$ be a connected reductive complex linear algebraic group equipped with
an automorphism $\theta$ of order $2$. There is a
$\theta$-stable Borel subgroup $B$ which contains a $\theta$-stable
maximal torus $T$ \cite[\S 7]{steinberg} with normaliser $N$. Let
$K=G^\theta$ be the fixed point subgroup. We may 
always assume $\theta$ to be the complexification of the Cartan
involution of some real form $G_\R$ of $G$.

% with the associated
%symmetric space $G_\R/K_\R$. 

The flag variety $X=G/B$ decomposes into finitely many orbits under the
action of the symmetric subgroup $K$ by left translations. A
natural ``Bruhat-like'' partial order on the set of orbits $K\backslash
X$ is defined by 
inclusion of their closures. Let $V$ denote this poset. Richardson
and Springer \cite{RS, RS2} defined a
poset map $\varphi: V\to \br(W)$, where $\br(W)$ is the Bruhat order
on the Weyl group $W=N/T$. The image of $\varphi$ is contained in the
set of {\em twisted involutions} $\twist=\{w\in W\mid
\theta(w)=w^{-1}\}$. In general, $\varphi$ is neither injective nor
surjective. For certain choices of $G$ and $\theta$, however,
$\varphi$ produces a poset isomorphism $V\cong \br(\iota(\theta))$, where
$\iota(\theta) = \{\theta(w^{-1})w\mid w\in W\}\subseteq \twist$
is the set of {\em twisted identities} and $\br(\cdot)$ denotes
induced subposet of $\br(W)$. In Section \ref{se:restriction}, we shall make
explicit under what circumstances this fairly restrictive
assumption holds. Now suppose that $\varphi$ is such an isomorphism and
let $\COO{w}$, $w\in \iot$, denote the closure of the orbit
$\OO_w=\varphi^{-1}(w)$. In this article we express the
rationally singular locus of the symmetric variety $\COO{w}$ in terms
of the combinatorics of $\iot$.

With each $w\in \iot$, we shall associate a {\em Bruhat graph}
$\bg(w)$ with vertex 
set $I_w = \{u\in \iot\mid u\le w\}$. Our first main result, Theorem
\ref{th:regtoratsmooth}, states that $\COO{w}$ is rationally smooth at
$\OO_u$ if and only if $v$ is contained in $\rho(w)$ edges for all
$u\le v \le w$, where $\rho(w)$ is the rank of $w$ in $\br(\iot)$. In particular, $\COO{w}$ is rationally smooth if and only if
$\bg(w)$ is $\rho(w)$-regular. This latter statement also turns out to
be equivalent to the principal order ideal $\br(I_w)$ being
rank-symmetric; see Theorem \ref{th:rank_symmetry} below.

The assertions just stated generalise celebrated criteria due to Carrell
and Peterson \cite{CP} for rational smoothness 
of Schubert varieties. We recover their results in the
special case where $G=G^\prime\times G^\prime$ and $\theta(x,y)=(y,x)$.

The main brushstrokes of our proofs are completely similar to
those of Carrell and Peterson. Below the surface, however, their results
rely on delicate connections between Kazhdan-Lusztig polynomials and
the combinatorics of (ordinary) Bruhat graphs. Our chief contribution is to
extend these properties to a more general setting. Very roughly, here is
what we do:

First, properties of $\iot$ are established that combined with
results of Brion \cite{brion} imply a bound on the degrees in
$\bg(w)$ that generalises ``Deodhar's inequality'' for degrees in ordinary
Bruhat graphs of Weyl groups. 

Second, an explicit procedure, in terms of the combinatorics of $\iot$, for
computing the  ``$R$-polynomials'' of \cite{LV, vogan} is extracted from the
correspondence $V \leftrightarrow \iot$. Using this procedure we
establish several properties of these polynomials (and therefore
of Kazhdan-Lusztig-Vogan polynomials) and relate them to degrees in
the graphs $\bg(w)$. This generalises well known properties of ordinary
Kazhdan-Lusztig polynomials and $R$-polynomials and how they are
related to ordinary Bruhat graphs.

The most prominent example where our results say something which is
not contained in \cite{CP} is
$G=\SL_{2n}(\C)$, $K=\Sp_{2n}(\C)$. For this setting, we prove the
stronger statement (Corollary \ref{co:bottom}) that the degree of the
bottom vertex alone suffices 
to decide rational smoothness. That is, $\COO{w}$ is rationally smooth
at $\OO_u$ if and only if the degree of $u$ in $\bg(w)$ is
$\rho(w)$. This is analogous to a corresponding result for type $A$
Schubert varieties which is due to Deodhar \cite{deodhar}. Again, that
result is contained in ours as a special case. 

\begin{remark}{\em 
After a preliminary version of this article was circulated,
McGovern \cite{mcgovern} has applied our results in 
order to deduce a criterion for (rational) smoothness in the case $G=
\SL_{2n}(\C)$, $K=\Sp_{2n}(\C)$ in terms of pattern avoidance
among fixed point free involutions. Moreover, he proved that
in this case the rationally singular loci in fact coincide with the
singular loci. 
}\end{remark}

Closures of symmetric orbits are interesting objects in their own right, but
another important reason to study their singularities is their impact
on representation theory. We outline this connection while describing
one of our main tools, Kazhdan-Lusztig-Vogan polynomials, in the
next section.  

In Section \ref{se:restriction}, we make precise the assumptions on
$\theta$ for which our results are valid. Thereafter, the Bruhat graphs $\bg(w)$ are introduced in
Section \ref{se:BG}. Our Carrell-Peterson type criteria for rational
smoothness are deduced in Section \ref{se:main}. Finally, in Section
\ref{se:bottom}, we prove that the bottom vertex alone suffices to decide
rational smoothness when $G= \SL_{2n}(\C)$, $K=\Sp_{2n}(\C)$.

\begin{acknowledgement}{\em 
The author is grateful to W.\ M.\ McGovern for many helpful discussions.
}\end{acknowledgement}

\section{KLV polynomials and representation theory}\label{se:KLV}
In the present  paper, the principal method for detecting rational
singularities of symmetric orbit closures is via Kazhdan-Lusztig-Vogan
polynomials. Here, we 
briefly review some of their properties and establish notation. For
more information we 
refer the reader to \cite{LV} or \cite{vogan}. Our terminology chiefly
follows the latter reference. 

Let $\D$ denote the set of pairs $(\OO, \gamma)$, where $\OO\in
K\backslash X$ and $\gamma$ is a $K$-equivariant local system on
$\OO$. The choice of $\gamma$ is equivalent to the choice of a
character of the component group of the stabiliser $K_x$ of a
point $x\in \OO$. In 
particular, $\gamma$ is unique if $K_x$ is connected. Since $\OO$ is
determined by $\gamma$, we may abuse notation and write $\gamma$ for
$(\OO,\gamma)$. With each pair $\gamma, \delta\in \D$, we associate
polynomials $R_{\gamma, \delta}, P_{\gamma, \delta}\in \Z[q]$. The
$R$-polynomials can be computed using  
a recursive procedure which we refrain from stating in full generality
here; see \cite[Lemma 6.8]{vogan} for details. A special case
sufficient for our purposes is formulated in Proposition \ref{pr:R} below. 

Let $\M$ denote the free $\Z[q,q^{-1}]$ module with basis $\D$. For
fixed $\delta\in \D$, we have in $\M$ the identity
\[
 q^{-l(\delta)} \sum_{\gamma\le \delta}P_{\gamma,\delta}(q)\gamma =
 \sum_{\beta\le \gamma\le \delta}(-1)^{l(\beta)-l(\gamma)}
 q^{-l(\gamma)}P_{\gamma,\delta}(q^{-1})R_{\beta,\gamma}(q)\beta
\]
which subject to the restrictions $P_{\gamma,\gamma}=1$ and
$\deg(P_{\gamma,\delta})\le (l(\delta)-l(\gamma)-1)/2$ uniquely
determines the {\em Kazhdan-Lusztig-Vogan (KLV)} polynomials
$P_{\gamma,\delta}$ \cite[Corollary 6.12]{vogan}.\footnote{Note that there is a typo which has an impact on the cited result. We are
  grateful to D.\ A.\ Vogan for pointing out that the displayed
  formula in the statement of \cite[Lemma 6.8]{vogan} should read 
\[
D(\delta) = u^{-l(\delta)}\sum_\gamma(-1)^{l(\gamma)-l(\delta)}R_{\gamma,\delta}(u)\gamma.
\]} Here, $l(\cdot)$
indicates the dimension of the corresponding orbit, and the order on
$\D$ is the {\em Bruhat $\G$-order} \cite[Definition 5.8]{vogan}.

KLV polynomials serve as measures of the singularities of symmetric
orbit closures; cf.\ \cite[Theorem 1.12]{vogan}. In particular, their
coefficients are nonnegative. Another consequence is the
following:
\begin{proposition}
Let $\le$ denote the order relation in $V$, i.e.\ containment among
orbit closures. Given orbits $\PP,\OO \in K\backslash X$ with $\PP \le
\OO$, let $\delta=(\OO,\C_\OO)$, where $\C_\OO$
is the trivial local system. Then, $\COO{}$ is rationally smooth at some
(equivalently, every) point in $\PP$ if and only if 
\[
P_{\gamma, \delta} =
\begin{cases}
1 & \text{if $L=\C_\QQ$,}\\
0 & \text{if $L\neq \C_\QQ$,}
\end{cases}
\]
for all $\gamma=(\QQ,L)\in \D$ with $\PP\le \QQ\le \OO$.
\end{proposition}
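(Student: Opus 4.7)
The plan is to deduce the statement by combining the sheaf-theoretic interpretation of KLV polynomials from \cite[Theorem 1.12]{vogan} with the standard characterization of rational smoothness via intersection cohomology. The central input is that, for $\delta = (\OO, \C_\OO)$ and $\gamma = (\QQ, L) \in \D$ with $\QQ \subseteq \COO{}$, the polynomial $P_{\gamma,\delta}$ records, coefficient by coefficient, the dimensions of the $L$-isotypic component of the cohomology of the stalk of the intersection cohomology complex $IC(\COO{}, \C_\OO)$ at any point $y \in \QQ$, where the isotypic decomposition is taken under the natural action of the component group $\pi_0(K_y)$. Under this dictionary, the conditions $P_{\gamma,\delta} = 1$ for $L = \C_\QQ$ and $P_{\gamma,\delta} = 0$ otherwise are equivalent to the assertion that the stalk of $IC(\COO{}, \C_\OO)$ at $y$ is one-dimensional, concentrated in a single cohomological degree, and supported on the trivial local system.

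Next, I would invoke the standard fact that $\COO{}$ is rationally smooth at a point $x$ if and only if the complex $IC(\COO{}, \C_\OO)$ agrees with the shifted constant sheaf $\C_{\COO{}}[\dim \OO]$ on some open neighborhood of $x$ in $\COO{}$; equivalently, the stalks of $IC(\COO{}, \C_\OO)$ are one-dimensional and concentrated in a single degree at every point $y$ in such a neighborhood. Since the $IC$ complex and the stratification of $\COO{}$ by $K$-orbits are both $K$-equivariant, this is an invariant condition on the orbit $\PP$ containing $x$, which accounts for the ``some (equivalently, every)'' clause. Moreover, an orbit $\QQ$ satisfies $\PP \le \QQ \le \OO$ in $V$ precisely when $\PP \subseteq \overline{\QQ} \subseteq \COO{}$, so every open neighborhood of $x$ in $\COO{}$ meets exactly those orbits lying in this interval. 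Hence rational smoothness at $x$ is equivalent to the triviality of the $IC$ stalks at one (equivalently, every) point of each such $\QQ$.

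Combining the two steps proves the proposition: $\COO{}$ is rationally smooth at $x \in \PP$ iff the stalks of $IC(\COO{}, \C_\OO)$ are trivial at a point of each intermediate orbit $\QQ$, which by the first step is precisely the stated condition on all $P_{\gamma, \delta}$ with $\gamma = (\QQ, L)$ and $\PP \le \QQ \le \OO$. The main obstacle is the careful bookkeeping of local systems and the $\pi_0(K_y)$-equivariant decomposition of the $IC$ stalks; this is what separates the present setting from the classical Schubert case, where KL polynomials are indexed purely by Weyl group elements. Since this bookkeeping is precisely what is packaged by \cite[Theorem 1.12]{vogan}, the cleanest route is to cite that result as a black box and reduce the argument to the geometric equivalence between rational smoothness and triviality of the $IC$ sheaf on an open neighborhood.
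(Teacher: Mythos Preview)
Your proposal is correct and follows exactly the route the paper intends: the proposition is stated in the paper without proof, immediately after the sentence ``KLV polynomials serve as measures of the singularities of symmetric orbit closures; cf.\ \cite[Theorem 1.12]{vogan} \ldots\ Another consequence is the following,'' so the paper is simply citing Vogan's geometric interpretation of $P_{\gamma,\delta}$ as $IC$-stalk multiplicities and leaving the passage to rational smoothness implicit. Your write-up makes that passage explicit and is the standard argument; the only minor imprecision is the phrase ``every open neighborhood of $x$ in $\COO{}$ meets exactly those orbits lying in this interval''---what you need is that every neighborhood meets all such $\QQ$, while a sufficiently small one meets only those, but this does not affect the validity of the argument.
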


The gadgets just described are fundamental ingredients in the
representation theory of $G_\R$. Fix an infinitesimal character
for $G_\R$. Then, $\D$ is in bijective correspondence with two families
of $G_\R$-representations with this infinitesimal character. Given
$\gamma\in \D$, there is the standard $(\g, K_\R)$-module $X(\gamma)$
induced from a discrete series representation, and there is the
irreducible $(\g, K_\R)$-module $\overline{X}(\gamma)$. The transition
between the two families is governed by the KLV polynomials. Namely,
one has
\[
\overline{\Theta}(\gamma) =
\sum_{\gamma^\prime}(-1)^{l(\gamma)-l(\gamma^\prime)}P_{\gamma^\prime,\gamma}(1)\Theta(\gamma^\prime), 
\]
where $\Theta(\gamma)$ and $\overline{\Theta}(\gamma)$ denote the
characters of $X(\gamma)$ and $\overline{X}(\gamma)$, respectively
\cite{LV, vogan}.

\section{Restricting the involution}\label{se:restriction}
Consider the set $\V = \{g\in G\mid \theta(g^{-1})g\in N\}$. The set
of orbits $K\backslash \V/T$ parametrises $K\backslash X$. In this
way, the map
$\V\to W$ given by $g\mapsto \theta(g^{-1})gT$ induces the map
$\varphi:V\to W$ which was mentioned in the introduction. Observe that
the image of $\varphi$ is contained in $\twist$. 

Throughout
this paper we shall only allow certain choices of $\theta$. More
precisely, {\em we from now on assume that $\theta$ obeys the following
condition:}
\begin{hypothesis}\label{hy:theta}
The fixed point subgroup $K$ is connected. Moreover, $\varphi:V\to W$
satisfies $\varphi(v_0)\in \iot$, where 
$v_0\in V$ is the maximum element, i.e.\ the dense orbit.
\end{hypothesis}

\begin{remark} {\em 
If $G$ is semisimple and simply connected, then $K$ is necessarily
connected. This result is due to Steinberg \cite[Theorem
8.1]{steinberg}. In some sense, the general situation can be reduced
to the study of semisimple simply connected $G$; see \cite{RS}.}
\end{remark}

Several consequences are collected in the next proposition. We let
$\Phi$ denote the root system of $G,T$ and write $R\subset W$ for the
corresponding set of reflections.

\begin{proposition}\label{pr:theta}
Hypothesis \ref{hy:theta} implies the following:
\begin{itemize}
\item[(i)] The map $\varphi$ yields a poset isomorphism $V\to
  \br(\iot)$.

\item[(ii)] There is a unique $K$-equivariant local system, namely
  $\C_\OO$, on each orbit $\OO\in 
  K\backslash X$. In particular, the sets $\D$, $K\backslash X$
  and $\iot$ may be identified, and the Bruhat $\G$-order
  on $\D$ coincides with $V$ and $\br(\iot)$.

\item[(iii)] Let $\alpha\in \Phi$ and denote by $G_\alpha\subseteq G$ the
corresponding rank one semisimple group. Then, we are in one of the
following two situations:
\begin{itemize}
\item[(a)] The root $\alpha$ is {\em compact imaginary}. That is, $G_\alpha
  \subseteq K$.
\item[(b)] The root $\alpha$ is {\em complex} (meaning
  $\theta(\alpha)\neq \alpha$) and  $\theta(\alpha)+\alpha\not \in \Phi$.
\end{itemize}

\item[(iv)] If $r\in R$, then $\theta(r)r = r\theta(r)$.
\item[(v)] The poset $\br(\iot)$ is graded with rank function $\rho$
  being half the ordinary Coxeter length. Moreover, $\rho(w) =
  l(\OO_w)-l(\OO_\id)$.
\end{itemize}
\begin{proof}
Assertion (i) follows from Richardson and Springer's \cite[Proposition
9.16]{RS}. 

For (ii), the local system on $Kx$, $x\in X$, is unique if the
isotropy subgroup $K_x$ is connected. Under Hypothesis
\ref{hy:theta}, Springer's \cite[Proposition 4.8]{springer1} implies that $K_x$
is connected if the torus fixed point group $T^\theta$ is
connected. Since $K$ is connected, this follows from \cite[Lemma
  5.1]{richardson}.

In order to prove (iii), suppose $\theta(\alpha)=\alpha$ but $G_\alpha
\not \subseteq K$. Then, the corresponding reflection $r_\alpha\in R$
is in the image of $\varphi$ by \cite[Lemma 2.5(i)]{springer2}. This
image is, however, $\iot$ which does not contain any reflections.

If $\theta(\alpha) \neq \alpha$ and $\beta=\alpha+\theta(\alpha)\in
\Phi$, then $\theta(\beta)=\beta$ and $G_\beta\not \subseteq K$ by
\cite[Lemma 2.6]{springer1}. This once again leads to the above
contradiction.

Concerning (iv), assuming $\theta(r)\neq r$ \cite[Lemma
  2.5]{springer1} implies that the 
dihedral group generated by $r$ and $\theta(r)$ is either of type
$A_1\times A_1$ or of type $A_2$. If the latter were true, we would have
$\theta(\alpha)+\alpha \in \Phi$, where $\alpha$ is the positive root
corresponding to $r$. This contradicts part (iii), and the claim is
established. 

Finally, the first part of (v) follows from (iv) in conjunction with
\cite[Theorem 4.6]{hultman}. The second is then immediate from
\cite[Theorem 4.6]{RS}.
\end{proof}
\end{proposition}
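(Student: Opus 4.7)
The plan is to treat (i) and (ii) as direct citations from the Richardson--Springer framework, and then to handle (iii)--(v) as an interlocking cluster whose linchpin is the independent observation that no reflection lies in $\iot$.

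For (i), Richardson and Springer have already shown that $\varphi$ is a poset map whose image is a downward-closed subset of $\br(\twist)$, and that $\varphi$ restricts to a bijection onto its image precisely when that image is contained in $\iot$. Hypothesis~\ref{hy:theta} puts $\varphi(v_0) \in \iot$, and since $\iot$ is itself downward-closed in $\br(\twist)$ under the relevant conditions, the image of $\varphi$ is all of $\iot$. For (ii), the uniqueness of a $K$-equivariant local system on $\OO$ is equivalent to the connectedness of the stabiliser $K_x$ at a point $x \in \OO$; Springer's criterion upgrades this to connectedness of $T^\theta$, and a lemma of Richardson delivers connectedness of $T^\theta$ from that of $K$.

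For (iii) and (iv), I would first record the auxiliary fact that $\iot$ contains no reflection. The argument is a parity count: since $\theta$ stabilises $B$ it permutes the simple reflections and therefore preserves Coxeter length, while if $r_\alpha = \theta(x^{-1})x$ one would have $x = \theta(x) r_\alpha$, forcing $l(x)$ and $l(\theta(x))$ to have opposite parities, a contradiction. Given this, (iii) falls out by ruling out the forbidden cases: if $\alpha$ is imaginary with $G_\alpha \not\subseteq K$, Springer's work places $r_\alpha$ in the image $\iot$ of $\varphi$; and if $\alpha$ is complex with $\beta = \alpha + \theta(\alpha) \in \Phi$, then $\beta$ is $\theta$-fixed with $G_\beta \not\subseteq K$ by another lemma of Springer, reducing to the previous contradiction. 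For (iv), if $\theta(r) \neq r$ then Springer identifies the dihedral group $\langle r, \theta(r) \rangle$ as having type $A_1 \times A_1$ or $A_2$; the $A_2$ alternative would produce a root $\alpha + \theta(\alpha) \in \Phi$ prohibited by (iii), leaving the desired commutativity.

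Finally, (v) is a direct application of Hultman's theorem, which asserts that $\br(\twist)$ is graded with rank function equal to half the Coxeter length exactly when the commutativity condition of (iv) holds for every reflection; the induced subposet $\br(\iot)$ inherits the grading. The comparison of $\rho$ with orbit dimensions is then Richardson--Springer. The main obstacle I foresee is the apparent circularity among (iii), (iv) and (v), each of which superficially looks as if it might need the others; the resolution is to isolate the length-parity argument above, which keeps ``no reflection in $\iot$'' entirely outside the loop and allows the remaining implications to be arranged linearly.
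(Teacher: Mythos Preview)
Your proposal is correct and follows essentially the same route as the paper: (i) and (ii) are the same citations, (iii) and (iv) are argued in the same order with the same Springer lemmata, and (v) is the same appeal to Hultman plus Richardson--Springer. The one substantive addition is your explicit length-parity argument showing that $\iot$ contains no reflections; the paper simply asserts this without proof, so your version is slightly more self-contained. One small caution on (v): Hultman's Theorem~4.6 is stated directly for $\br(\iot)$ rather than for $\br(\twist)$, so you need not (and should not) pass through ``the induced subposet inherits the grading,'' which would otherwise require justification.
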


The following example allows us to consider many of our results as
generalisations of statements about Schubert varieties.

\begin{example}\label{ex:Schubert}
If $G^\prime$ is a connected reductive complex linear algebraic
group and $G = G^\prime\times G^\prime$, the involution $\theta$ which
interchanges the two factors makes $K$ the diagonal subgroup. In this
case, $\iot = \twist$, so Hypothesis \ref{hy:theta} is satisfied. The
poset $\br(\iot$ coincides with $\br(W^\prime)$, where $W^\prime$ is the Weyl group
of $G^\prime$. There is a one-to-one correspondence between
$K$-orbits in $X$ and Schubert cells in the Bruhat decomposition of
the flag variety of $G^\prime$ which preserves a lot of structure
including the property of having rationally smooth closure at a
given orbit.  
\end{example}

In addition to the setting in Example \ref{ex:Schubert} there are a
few more cases that satisfy Hypothesis \ref{hy:theta}. They are
denoted $A\,II$,  
$D\,II$ and $E\,IV$ in the classification of symmetric spaces
$G_\R/K_\R$ given e.g.\ in
Helgason \cite{helgason}.\footnote{The ``usual'' construction of
  $D\,II$ would yield $G=\mathrm{SO}_{2n}(\C)$,
  $K=\mathrm{S}(\mathrm{O}_{2n-1}(\C)\times \mathrm{O}_1(\C))\cong
  \mathrm{O}_{2n-1}(\C)$ so that $K$ is disconnected. However, passing to the fundamental
  cover, we have $G=\mathrm{Spin}_{2n}(\C)$, $K=\mathrm{Spin}_{2n-1}(\C)$ in
  agreement with Hypothesis \ref{hy:theta}.} The corresponding Weyl groups are $A_{2n+1}$,
$D_n$ and $E_6$, respectively, with $\theta$ in each case
restricting to the Weyl group as the
unique nontrivial Dynkin diagram involution. Types $D$ and $E$
could in principle be handled separately. In the former case,
$\iot$ has a very simple structure (cf.\ \cite[proof of Theorem
  5.2]{hultman}), whereas the latter admits a brute force
computation. Thus, the main substance lies in the
$A_{2n+1}$ case where $\br(\iot)$ is an incarnation of the
containments among closures of $\Sp_{2n}(\C)$ orbits in the flag
variety $\SL_{2n}(\C)/B$; see \cite[Example 10.4]{RS} for a discussion
of this case. Nevertheless, we have opted to keep our
arguments type independent regarding all assertions that
are valid in the full generality of Hypothesis \ref{hy:theta}. There
are two reasons. First, the natural habitat for Theorems
\ref{th:regtoratsmooth} and \ref{th:rank_symmetry} is the general
setting; no simplicity would be gained by formulating the arguments in
type $A$ specific terminology. Second, we hope that the less specialised
viewpoint shall prove suitable as point of departure for generalisations beyond
Hypothesis \ref{hy:theta}. 

\section{``Bruhat graphs''}\label{se:BG}
Let $*$ denote the $\theta$-twisted right conjugation action of $W$ on
itself, i.e.\ $u*w = \theta(w^{-1})uw$ for $u,w\in W$. Then $\iot$ is
the orbit of the identity element $\id \in W$. 

Recall that $I_w = \{u\in \iot\mid u\le w\}$.

\begin{definition}
Given $w\in \iot$, let $\BG(w)$ be the graph with vertex set $I_w$ and
an edge $\{u,v\}$ whenever $u=v*t\neq v$ for some reflection $t\in R$. 
\end{definition}

Notice that $\bg(u)$ is an induced subgraph of $\bg(w)$ if $u\le w$.
See Figure \ref{fi:bg} for an illustration. 

We shall refer to graphs of the form $\BG(w)$ as {\em Bruhat
  graphs}, because in the setting of Example \ref{ex:Schubert}, they coincide
  with (undirected versions of) the ordinary Bruhat graphs in
  $W^\prime$ introduced by Dyer \cite{dyer}.

\begin{figure}[htb]
\begin{center}
\epsfig{height=8cm, file=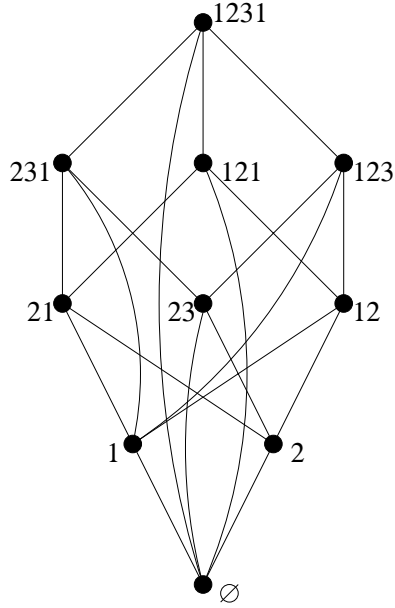}
\end{center}
\caption{A picture of the Bruhat graph $\bg(w)$ where
  $w=s_5s_3s_4s_5s_1s_2s_3s_1\in \iot\subset A_5$. Here, $s_i$ denotes
  the simple reflection $(i,i+1)$ in the usual manifestation of $A_5$ as
  the symmetric group $S_6$. The involution $\theta$ sends $s_{6-i}$
  to $s_i$. A vertex $u\in I_w$ is labelled by the indices of a sequence of
  simple reflections whose product $x$ satisfies
  $u=\theta(x^{-1})x$. The straight edges indicate the covering
  relation of $\br(\iot)$.}\label{fi:bg} 
\end{figure}

Our next goal is to show that (the first part of) Brion's \cite[Theorem
  2.5]{brion} implies lower bounds for the degrees in a Bruhat
graph. This essentially amounts to a reformulation of the
relevant parts of \cite{brion} using our terminology.

\begin{lemma}\label{le:edges}
Let $w\in \iot$ and $u,v\in I_w$, $u\neq v$. Write
$u=\theta(x^{-1})x$ for $x\in W$. The following are equivalent:
\begin{itemize}
\item[(i)] $\{u,v\}$ is an edge in $\BG(w)$.
\item[(ii)] There are exactly two distinct reflections $t\in R$ such
  that $u*t=v$.
\item[(iii)] There are exactly two distinct reflections $t\in R$ such
  that $\theta(x^{-1})\theta(t)tx=v$. If $t$ is one of these
  reflections, then $\theta(t)$ is the other. 
\end{itemize}
\begin{proof}

The implication (ii) $\Rightarrow$ (i) is obvious.

We have (iii) $\Rightarrow$ (ii), since
$\theta(x^{-1})\theta(t)tx=v$ if and only if
$u*r=v$ for $r=x^{-1}tx$. 

In order to show (i) $\Rightarrow$ (iii), assume $v =
\theta(x^{-1})\theta(r)rx = \theta(x^{-1})\theta(t)tx$, for $r,t\in
R$. In particular, $t\theta(t)=r\theta(r)$. Dyer's \cite[Lemma 3.1]{dyer}
shows that $\langle r,\theta(r),t,\theta(t)\rangle$ is a dihedral
reflection subgroup of $W$. Since $W$ is simply laced (which e.g.\ follows
from part (iv) of Proposition \ref{pr:theta} and inspection of finite
type Dynkin diagrams), this subgroup must be of
type $A_1\times A_1$; $A_2$ is not possible since
$t\theta(t)=\theta(t)t$. Hence, $\{r,\theta(r)\}=\{t,\theta(t)\}$, and
these two reflections are the possible candidates for $t$. 

%Let $t, t^\prime\in R$. We have 
%We have $v=t^\prime u\theta(t^\prime)$
%for some $t^\prime\in R$ if and only if $v =
%xt\theta(t)\theta(x^{-1})$ for some $t \in R$
%(namely, $t = x^{-1}t^\prime x$). Since $u\neq v$,
%$\theta(t)\neq t$. 

%By Lemma \ref{le:commute}, $t\theta(t)= \theta(t)t$. We may thus write
%$v = \tilde{t}u\theta(\tilde{t})$ for $\tilde{t}=x\theta(t)x^{-1}\neq
%t^\prime \in R$. 

%Now, if there would exist yet another reflection $r^\prime\in R
%\setminus \{t^\prime, \tilde{t}\}$ with $v=ru\theta(r)$, we would
%similarly have $r\in R\setminus \{t, \theta(t)\}$ so that $r\theta(r)
%= t\theta(t)$. Hence, $t,\theta(t), r, \theta(r)$ are four different
%reflections that generate a dihedral group. Unless we are in the
%setting of Example \ref{ex:disconnected}, all dihedral subgroups of
%$W$ are, however, simply-laced and therefore contain at most three
%reflections. In the setting of the said example, the four reflections
%belong to (at least) two different irreducible components of $W$ which
%again yields a contradiction. Thus, $t^\prime$ and $\tilde{t}$ are the
%only two reflections with the sought properties.

\end{proof}
\end{lemma}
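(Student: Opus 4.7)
The plan is to establish the chain (iii)~$\Rightarrow$~(ii)~$\Rightarrow$~(i)~$\Rightarrow$~(iii), where only the last implication requires nontrivial work. The implication (ii)~$\Rightarrow$~(i) is immediate from the definition of $\BG(w)$. For (iii)~$\Rightarrow$~(ii), I would use that conjugation by $x$ is a bijection on $R$: the identity $u*r = \theta(r)\theta(x^{-1})xr$ shows that $r \mapsto xrx^{-1}$ sends the set of reflections with $u*r=v$ bijectively onto the set of reflections with $\theta(x^{-1})\theta(t)tx = v$, so the two cardinalities agree.

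For (i)~$\Rightarrow$~(iii), I start from a reflection $r$ with $u*r = v$, whose existence is guaranteed by (i), and set $t = xrx^{-1}$. The computation of the previous paragraph shows $\theta(x^{-1})\theta(t)tx = v$. Applying $\theta$ to $t$, using that $\theta^2 = \id$ and the commutation relation $t\theta(t) = \theta(t)t$ from Proposition \ref{pr:theta}(iv), I get
\[
\theta(x^{-1})\theta(\theta(t))\theta(t)x = \theta(x^{-1})t\theta(t)x = \theta(x^{-1})\theta(t)tx = v,
\]
so $\theta(t)$ is a second solution. These are distinct: if $t = \theta(t)$, then $\theta(t)t = 1$ forces $v = \theta(x^{-1})x = u$, contradicting $u \neq v$.

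To rule out a third solution, suppose $t'$ is another reflection with $\theta(x^{-1})\theta(t')t'x = v$. Then $\theta(t')t' = \theta(t)t$, and using commutation this can be rewritten as $t'\theta(t') = t\theta(t)$. Since $t \neq \theta(t)$, the common value $t\theta(t)$ is not the identity, so Dyer's Lemma 3.1 applies and $\langle t,\theta(t),t',\theta(t')\rangle$ is a dihedral reflection subgroup of $W$. By the classification recalled after Example \ref{ex:Schubert}, $W$ is of type $A$, $D$, or $E$ and hence simply laced, so this dihedral subgroup is of type $A_1 \times A_1$ or $A_2$. Type $A_2$ is excluded because no two distinct reflections in an $A_2$ commute, while $t$ and $\theta(t)$ are distinct and commute. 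Thus the subgroup is $A_1 \times A_1$, which contains only two reflections, forcing $t' \in \{t,\theta(t)\}$.

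The main obstacle is the combined application of Dyer's Lemma and the simply-laced dichotomy in the final step; the remainder consists of routine bookkeeping with the twisted conjugation action and the commutation identity $t\theta(t) = \theta(t)t$ from Proposition \ref{pr:theta}(iv).
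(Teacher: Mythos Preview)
Your argument is correct and follows essentially the same route as the paper: the cycle (iii)$\Rightarrow$(ii)$\Rightarrow$(i)$\Rightarrow$(iii), with the nontrivial step handled by Dyer's Lemma~3.1 together with the observation that the dihedral reflection subgroup $\langle t,\theta(t),t',\theta(t')\rangle$ must be $A_1\times A_1$ since $t$ and $\theta(t)$ commute. You are slightly more explicit than the paper in verifying that $t$ and $\theta(t)$ are genuinely two distinct solutions (the paper leaves this implicit); one small caveat is that your appeal to ``the classification after Example~\ref{ex:Schubert}'' for simple-lacedness covers only the cases $A\,II$, $D\,II$, $E\,IV$ and not the diagonal case of Example~\ref{ex:Schubert} itself, whereas the paper phrases this via Proposition~\ref{pr:theta}(iv)---but in the diagonal case the conclusion $\{t',\theta(t')\}=\{t,\theta(t)\}$ is immediate anyway.
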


We are now in position to bound the degrees of a Bruhat
graph. Combining the
first part of Brion's \cite[Theorem 2.5]{brion} with part (iii) of Proposition
\ref{pr:theta} shows that the rank of a
vertex $v=\theta(x^{-1})x$ in $\BG(w)$ is at most half the number of
complex reflections (i.e.\ reflections that correspond to complex
roots) $t\in R$ such that $\theta(x^{-1})\theta(t)tx \leq
w$. By Lemma \ref{le:edges}, this is precisely the degree of $v$ in
$\BG(w)$. We thus have the following fact:

\begin{theorem} \label{th:inequality}
For $w\in \iot$, the degree of each vertex in $\BG(w)$ is at least $\rho(w)$. 
\end{theorem}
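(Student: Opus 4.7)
The plan is to translate the degree of $v$ in $\BG(w)$ into a count of certain reflections, and then bound this count by a Zariski tangent space dimension via Brion's Theorem 2.5.

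First, I would fix a $T$-fixed point $x$ in the orbit $\OO_v$, where $v = \theta(x^{-1})x \in I_w$. I would appeal to the first part of \cite[Theorem 2.5]{brion}, which in this setting presents the Zariski tangent space $T_x \COO{w}$ as a sum of weight spaces indexed by roots $\alpha \in \Phi$ such that $\theta(x^{-1})\theta(r_\alpha)r_\alpha x \leq w$. Proposition \ref{pr:theta}(iii) rules out any roots other than compact imaginary and complex ones. For a compact imaginary root, $G_\alpha \subseteq K$ forces $\theta(r_\alpha) = r_\alpha$, so the corresponding weight direction is already tangent to $\OO_v = Kx$ and contributes nothing new. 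Thus only complex roots enter the count, and by Lemma \ref{le:edges}(iii) these pair up as $\{\alpha, \theta(\alpha)\}$ with $\theta(\alpha)\neq \alpha$, each pair contributing a single edge of $\BG(w)$ incident to $v$.

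Second, since $\COO{w}$ is irreducible of dimension $l(\OO_w)$, we have $\dim T_x \COO{w} \geq l(\OO_w)$; by Proposition \ref{pr:theta}(v) this equals $\rho(w) + l(\OO_{\id})$. Subtracting the contribution coming from the tangent space to $\OO_v$ itself (dimension $l(\OO_v) = \rho(v) + l(\OO_{\id})$) leaves at least $\rho(w) - \rho(v)$ transverse weight spaces, corresponding to pairs $\{t,\theta(t)\}$ of complex reflections with $v*t > v$ and $v*t \leq w$. Adjoining the $\rho(v)$ pairs with $v*t < v$, which account for the tangent directions along $\OO_v$, and identifying each pair with an edge of $\BG(w)$ via Lemma \ref{le:edges}, yields the desired bound $\deg_{\BG(w)}(v) \geq \rho(w)$.

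The main obstacle is to make the bookkeeping precise: one must check that Brion's theorem really delivers a tangent space description whose weights are indexed exactly by reflections $t$ with $\theta(x^{-1})\theta(t)tx \leq w$, that compact imaginary roots truly drop out after accounting for $T_x\OO_v$, and that the "downward" count from $v$ is exactly $\rho(v)$ so that the additive constant $l(\OO_{\id})$ cancels cleanly. Once these identifications are in place, the dimension inequality coming from irreducibility of $\COO{w}$ produces the bound immediately.
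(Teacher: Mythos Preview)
Your proposal invokes exactly the same ingredients as the paper---Brion's \cite[Theorem 2.5]{brion}, Proposition~\ref{pr:theta}(iii), and Lemma~\ref{le:edges}---and the overall logic agrees. The paper's route is more direct, however: it reads the first part of Brion's theorem as a reflection-count inequality which, once Proposition~\ref{pr:theta}(iii) has been used to discard all but the complex roots, immediately gives
\[
\rho(w)\ \le\ \tfrac{1}{2}\,\bigl|\{\,t\in R\ \text{complex}\ :\ \theta(x^{-1})\theta(t)tx\le w\,\}\bigr|,
\]
and Lemma~\ref{le:edges}(iii) then identifies the right-hand side with the degree of $v$ in $\BG(w)$. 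Your detour through the Zariski tangent space, the splitting into along-orbit versus transverse directions, and the re-insertion of the $\rho(v)$ downward edges is unnecessary: Brion's bound already counts all edges incident to $v$ at once, upward and downward alike. One caution worth flagging: the first part of \cite[Theorem 2.5]{brion} is a lower bound on the number of torus-stable curves through a fixed point, not a description of $T_x\COO{w}$; so the worry you voice in your last paragraph---whether Brion ``really delivers a tangent space description''---is justified, and the paper sidesteps it precisely by never passing through the tangent space. (Relatedly, the symbol $x$ in $v=\theta(x^{-1})x$ is an element of $W$, not a point of $G/B$; the fixed point in question need only be fixed by the torus in $K$, not by $T$.)
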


\begin{remark} {\em In the setting of Example \ref{ex:Schubert}, Theorem
\ref{th:inequality} specialises to ``Deodhar's inequality'' in $W^\prime$;
  see \cite[\S 6]{BL} and the references cited there.
}\end{remark} 

\begin{lemma}\label{le:comparability}
If $\{u,v\}$ is an edge in $\BG(w)$, then either $u < v$ or $v <
u$. Furthermore, $v$ has exactly $\rho(v)$ neighbours $u$ such that $u<v$.
\begin{proof}
Suppose $u=\theta(x^{-1})x \neq v =
u*t$ for some $x\in W$, $t\in R$. Define reflections
$r=u^{-1}\theta(t)u$ and $\tau = xtx^{-1}$. Using part (iv) of 
Proposition \ref{pr:theta}, we compute
\[
urt = v = \theta(x^{-1})\theta(\tau)\tau x = \theta(x^{-1})\tau
\theta(\tau) x = \theta(x^{-1})\tau \theta(x)ur = \theta(x^{-1})\tau
xr = utr.
\] 
Thus, $t$ and $r$ commute. Hence, $\{u,ut,\theta(t)u,v\} = u\langle
r,t \rangle$ and, by Dyer's \cite[Theorem 1.4]{dyer} the subgraph of
the (ordinary) Bruhat graph on $W$ induced by these four vertices is
isomorphic to the Bruhat
graph of the dihedral group on four elements. In particular, all pairs
$\{a,b\}\subset \{u,ut,\theta(t)u,v\}$ except at most one satisfy
$a\leq b$ or
$b\leq a$. Since the map $y\mapsto
\theta(y^{-1})$ is a poset automorphism of the Bruhat order which
sends $\theta(t)u$ to $ut$, $\{\theta(t)u,ut\}$ is the only
incomparable pair. This proves the first assertion.

For the second assertion, the above argument implies
\[
\{t\in R\mid vt < v\} = \{t \in R\mid v*t<v\}.
\]
It is well known that the left hand side has $\ell(v)=2\rho(v)$
elements. Lemma \ref{le:edges} concludes the proof.
\end{proof}
\end{lemma}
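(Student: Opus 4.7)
The plan is to exploit the dihedral structure hinted at by Lemma \ref{le:edges}. Starting from an edge $\{u,v\}$ in $\BG(w)$, I write $u=\theta(x^{-1})x$ and $v = u*t$ for some $t\in R$. The goal is to embed $u$ and $v$ into a concrete four-element coset in $W$ whose ordinary Bruhat structure is understood, then transport comparability information from there back to $u$ and $v$.

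Concretely, I would set $r = u^{-1}\theta(t)u \in R$ and verify, using part (iv) of Proposition \ref{pr:theta}, that $t$ and $r$ commute. Then the four elements $u,\,ut,\,\theta(t)u,\,v = \theta(t)ut$ form the left coset $u\langle r,t\rangle$ of a reflection subgroup of $W$ isomorphic to $A_1\times A_1$. Dyer's \cite[Theorem 1.4]{dyer} says the ordinary Bruhat graph induced on this coset is isomorphic to the Bruhat graph of the dihedral group of order $4$; in particular, exactly one of the six unordered pairs fails to be comparable in Bruhat order. To identify the culprit, I would invoke the Bruhat-order automorphism $y\mapsto \theta(y^{-1})$. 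Since $u,v\in\iot$ are twisted identities they are fixed, while a short computation shows the map swaps $\theta(t)u$ with $ut$; hence the unique incomparable pair is forced to be $\{\theta(t)u,ut\}$, and $u,v$ must be comparable. This yields the first assertion.

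For the second assertion, I would set up an equality
\[
\{t\in R\mid vt < v\} = \{t\in R\mid v*t < v\}.
\]
The right-to-left inclusion comes from the first assertion together with the above coset analysis: if $v*t<v$ then $v*t$ is one of $u, ut, \theta(t)u$, and in each case comparability plus the structure of the dihedral subgraph forces $vt<v$. The left-to-right inclusion is handled symmetrically, using $\theta(t)$ in the role of $t$ when needed (part (iv) of Proposition \ref{pr:theta} ensures $t$ and $\theta(t)$ play interchangeable roles). Once the equality is established, the left hand side has $\ell(v) = 2\rho(v)$ elements by standard Bruhat-order theory and part (v) of Proposition \ref{pr:theta}. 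Finally, Lemma \ref{le:edges}(ii) groups these reflections into pairs $\{t,\theta(t)\}$, each pair corresponding to a single neighbour of $v$, so there are exactly $\rho(v)$ neighbours below $v$.

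The main obstacle I anticipate is the bookkeeping for the second assertion: one needs to verify carefully that the two sets really do coincide, given that $v*t$ uses both left and right multiplication by $t$ (via $\theta$), whereas $vt$ is a pure right multiplication. The dihedral picture from the first part is precisely what makes this identification possible, so getting the first part in a clean form is important for the second.
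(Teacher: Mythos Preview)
Your proposal is correct and follows essentially the same route as the paper: define $r=u^{-1}\theta(t)u$, use Proposition~\ref{pr:theta}(iv) to see $t$ and $r$ commute, apply Dyer's \cite[Theorem 1.4]{dyer} to the coset $u\langle r,t\rangle=\{u,ut,\theta(t)u,v\}$, and invoke the Bruhat automorphism $y\mapsto\theta(y^{-1})$ to pin down the incomparable pair; then deduce the set equality $\{t\in R\mid vt<v\}=\{t\in R\mid v*t<v\}$ and finish via $\ell(v)=2\rho(v)$ and Lemma~\ref{le:edges}. Your version spells out the two inclusions in the set equality a bit more explicitly than the paper does, but the argument is the same.
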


\section{A criterion for rational smoothness}\label{se:main}

In general, the recursion for the $R$-polynomials mentioned in Section
\ref{se:KLV} is technically
rather involved. Since we are assuming Hypothesis
\ref{hy:theta}, however, the situation is simpler. 
Proposition \ref{pr:theta} allows us to identify the indexing set $\D$ with
$\iot$. With $\DR(v)$ denoting the {\em descent set} of $v\in \iot$, i.e.\ the set of
simple reflections $s$ such that $vs<v$, or equivalently $v*s<v$, the
recursion takes the following explicit form:  

\begin{proposition}\label{pr:R}
Suppose $s\in \DR(v)$. Then, the $R$-polynomials satisfy 
\[
R_{u,v}(q) =
\begin{cases}
R_{u*s,v*s}(q) & \text{if $u*s<u$,}\\
qR_{u*s,v*s}(q)+(q-1)R_{u,v*s}(q) & \text{if $u*s>u$,}\\
-R_{u,v*s}(q) & \text{if $u*s = u$.}
\end{cases}
\]
\begin{proof}
Consider the free $\Z[q,q^{-1}]$ module $\M$ with basis
$\iot$. The definition of the map $T_s:\M \to \M$ formulated in 
\cite[Definition 6.4]{vogan} boils down to
\[
T_s w = 
\begin{cases}
qw & \text{if $w*s = w$,}\\
w*s & \text{if $w*s>w$,}\\
qw*s + (q-1)w & \text{if $w*s<w$,} 
\end{cases}
\]
for $w\in \iot$ (the relevant cases being (a), (b1) and (b2),
respectively). Equating coefficients in the identity
\[
\sum_{u \in \iot}(-1)^{\rho(u)}R_{u,w}(q)u = -\sum_{u\in
  \iot}(-1)^{\rho(u)}R_{u,w*s}(q)(T_s+1-q)u 
\]
(see \cite[proof of Lemma 6.8]{vogan}) now yields
\[
R_{u,w}(q) = 
\begin{cases}
R_{u*s,w*s}(q)-R_{u,w*s}(q)(q-1+1-q) & \text{if $u*s<u$,}\\
qR_{u*s,w*s}(q)-(1-q)R_{u,w*s}(q) & \text{if $u*s > u$,}\\
-R_{u,w*s}(q) & \text{if $u*s = u$,}\\
\end{cases}
\]
if $s\in S$ satisfies $w*s<w$.

\end{proof}
\end{proposition}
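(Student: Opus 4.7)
The plan is to start from the general recursive description of $R$-polynomials in \cite[Lemma 6.8]{vogan} and show that under Hypothesis \ref{hy:theta} all the data simplifies to what is stated. The central object is the operator $T_s$ on the free $\Z[q,q^{-1}]$-module $\M$ with basis $\D$. By Proposition \ref{pr:theta}(ii) we may identify $\D$ with $\iot$, so $T_s$ becomes an operator on the free module on $\iot$. The first task is to verify that, on a basis element $w\in\iot$, Vogan's definition produces exactly the three-case formula
\[
T_s w = \begin{cases} qw & \text{if } w*s = w,\\ w*s & \text{if } w*s > w,\\ qw*s + (q-1)w & \text{if } w*s < w.\end{cases}
\]
This amounts to checking that only the cases labelled (a), (b1), (b2) in \cite[Definition 6.4]{vogan} can occur. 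The case $w*s=w$ is the ``type~(a)'' case; the cases $w*s\ne w$ split according to whether the action raises or lowers the length, and the absence of nontrivial $K$-equivariant local systems (Proposition \ref{pr:theta}(ii)) together with Proposition \ref{pr:theta}(iii) rules out the ``Cayley transform'' cases (type (c)) and forces the remaining two branches to be exactly (b1) and (b2).

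Next I would plug this into the identity
\[
\sum_{u\in\iot}(-1)^{\rho(u)}R_{u,w}(q)\,u = -\sum_{u\in\iot}(-1)^{\rho(u)}R_{u,w*s}(q)(T_s+1-q)u,
\]
which is precisely the form taken by \cite[proof of Lemma 6.8]{vogan} when one uses the corrected formula for $D(\delta)$ noted in the footnote. The right-hand side is a sum over $u\in\iot$ of three possible contributions depending on whether $u*s<u$, $u*s>u$, or $u*s=u$; each $(T_s+1-q)u$ contributes to the coefficients of $u$ and of $u*s$ in a determined way.

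Finally, I would equate the coefficient of a fixed basis vector $v$ on both sides. Reindexing $u\mapsto u*s$ in those terms where $T_s u$ contributes to $u*s$ gives a pairing: the coefficient of $v$ on the left is $(-1)^{\rho(v)}R_{v,w}(q)$, while on the right it collects terms from $R_{v,w*s}$ and $R_{v*s,w*s}$, with signs determined by $\rho(v*s)-\rho(v)=\pm 1$ whenever $v*s\ne v$ (Proposition \ref{pr:theta}(v) guarantees $\rho$ is a grading and that $*s$ changes the rank by exactly one when it moves $v$). Matching signs and cancelling the common $(-1)^{\rho(v)}$ yields the three displayed recursions.

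The main obstacle is the bookkeeping in the last step: carefully tracking which $u$ in the right-hand sum contributes to the coefficient of a prescribed $v$, and ensuring the sign $(-1)^{\rho(v*s)-\rho(v)}=-1$ is consistent in the three branches. Once the $T_s$-formula above is in hand, the rest is a mechanical coefficient comparison; the conceptual work is entirely in specializing Vogan's general recursion, where the trivialising effect of Hypothesis \ref{hy:theta} (connectedness of $K$, absence of noncompact imaginary roots) is what eliminates all cases beyond the three listed.
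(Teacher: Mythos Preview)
Your proposal is correct and follows essentially the same approach as the paper's proof: specialize Vogan's operator $T_s$ to the three cases (a), (b1), (b2) using Proposition~\ref{pr:theta}, then equate coefficients in the displayed identity from \cite[proof of Lemma~6.8]{vogan}. If anything, you are more explicit than the paper about why the remaining cases of \cite[Definition~6.4]{vogan} cannot occur and about the sign bookkeeping, but the argument is the same.
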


Together with the ``initial values'' $R_{u,u}(q) = 1$ and
$R_{u,v}(q)=0$ if $u \not \leq v$, we may calculate any $R_{u,v}$
using Proposition 
\ref{pr:R}. Rather than working with the actual $R$-polynomials, we shall
however find it more convenient to use the following simple variation:

\begin{definition}
For $u,v\in \iot$, let $Q_{u,v}(q) =
(-q)^{\rho(v)-\rho(u)}R_{u,v}(q^{-1})$.
\end{definition}
One readily verifies the following recursion:
\begin{proposition} \label{pr:Q-recursion}
For $s\in \DR(v)$, we have
\[
Q_{u,v}(q)=
\begin{cases}
Q_{u*s,v*s}(q) & \text{if $u*s<u$,}\\
qQ_{u*s,v*s}(q)+(q-1)Q_{u,v*s}(q) & \text{if $u*s >u$,}\\
qQ_{u,v*s}(q) & \text{if $u*s = u$.}
\end{cases}
\]
\end{proposition}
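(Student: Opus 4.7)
The plan is to verify the recursion for $Q_{u,v}$ by direct substitution into the $R$-polynomial recursion of Proposition \ref{pr:R}. The only real ingredient beyond bookkeeping is the grading property from Proposition \ref{pr:theta}(v): since $s\in\DR(v)$ means $v*s<v$, and $\br(\iot)$ is graded by $\rho$ (half the Coxeter length), we get $\rho(v*s)=\rho(v)-1$, and similarly $\rho(u*s)=\rho(u)\pm 1$ in the first two cases (while $\rho(u*s)=\rho(u)$ trivially in the third).

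Set $d=\rho(v)-\rho(u)$, so $Q_{u,v}(q)=(-q)^d R_{u,v}(q^{-1})$. In the first case ($u*s<u$), both ranks drop by one, so $\rho(v*s)-\rho(u*s)=d$, and $(-q)^d R_{u*s,v*s}(q^{-1})=Q_{u*s,v*s}(q)$ directly yields $Q_{u,v}(q)=Q_{u*s,v*s}(q)$. In the third case ($u*s=u$), we have $\rho(v*s)-\rho(u)=d-1$, so $R_{u,v}(q^{-1})=-R_{u,v*s}(q^{-1})=-(-q)^{-(d-1)}Q_{u,v*s}(q)$, and multiplying by $(-q)^d$ gives $Q_{u,v}(q)=qQ_{u,v*s}(q)$.

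The middle case ($u*s>u$) is where a small computation is needed. Here $\rho(v*s)-\rho(u*s)=d-2$ while $\rho(v*s)-\rho(u)=d-1$. Substituting
\[
R_{u,v}(q^{-1})=q^{-1}R_{u*s,v*s}(q^{-1})+(q^{-1}-1)R_{u,v*s}(q^{-1})
\]
and multiplying by $(-q)^d$, the first term becomes $(-q)^d q^{-1}(-q)^{-(d-2)}Q_{u*s,v*s}(q)=qQ_{u*s,v*s}(q)$, while the second becomes $(-q)(q^{-1}-1)Q_{u,v*s}(q)=(q-1)Q_{u,v*s}(q)$, producing the claimed formula.

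Since nothing beyond these sign and degree computations is required, there is no genuine obstacle; the only point worth emphasising is that Hypothesis \ref{hy:theta} enters through Proposition \ref{pr:theta}(v) to guarantee that $\rho$ is well-behaved under multiplication by a simple reflection in the $*$-action, so that the exponents on $(-q)$ balance correctly.
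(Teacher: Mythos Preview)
Your verification is correct and is precisely the routine computation the paper has in mind; the paper itself offers no proof beyond the phrase ``One readily verifies the following recursion'', and your substitution of $q\mapsto q^{-1}$ into Proposition~\ref{pr:R} together with the rank bookkeeping from Proposition~\ref{pr:theta}(v) is exactly that verification. The one implicit step you might make explicit is why $\rho(u*s)=\rho(u)\pm 1$ rather than possibly $\rho(u)$ in the first two cases: since $u*s=\theta(s)us$ differs from $u$ by left and right multiplication by simple reflections, $\ell(u*s)-\ell(u)\in\{-2,0,2\}$, and the Bruhat comparison $u*s\lessgtr u$ forces the sign, so $\rho=\ell/2$ changes by exactly one.
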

In particular, the $Q_{u,v}(q)$ are polynomials. In the setting of
Example \ref{ex:Schubert}, both the $R_{u,v}(q)$ and the $Q_{u,v}(q)$
coincide with the classical Kazhdan-Lusztig $R$-polynomials introduced
in \cite{KL}. The three lemmata coming up next hint that the
$Q_{u,v}(q)$ may provide the more useful generalisation. 

\begin{lemma} \label{le:edge}
For $u, v \in \iot$, we have
\[
Q^\prime_{u,v}(1)  =
\begin{cases}
1 & \text{if $u<v$ and $\{u,v\}$ is an edge in $\BG(v)$,}\\
0 & \text{otherwise.}
\end{cases}
\]
\begin{proof}
Suppose $s\in \DR(v)$. Differentiating
the equation in Proposition \ref{pr:Q-recursion} with respect to $q$,
and using that 
$Q_{u,v}(1)=R_{u,v}(1)=\delta_{u,v}$ (Kronecker's delta), it follows that
\[
Q_{u,v}^\prime(1) = Q^\prime_{u*s,v*s}(1)+\delta_{u,v*s}.
\]
It is clear that $\{u* s,v* s\}$ is an edge in $\BG(v)$ if and only if
the same is true about $\{u,v\}$. Employing induction on $\rho(v)$, it
thus suffices to 
show that $u* s<v* s$ if $v* s\neq u<v$ and $\{u,v\}$ is
an edge. Lemma \ref{le:comparability} shows that $u*s$ and $v*s$
are comparable in this situation. The assertion $u*s>v*s$ would
contradict the lifting property \cite[Lemma
  2.7]{hultman}, and we are done.
\end{proof} 

\begin{lemma}\label{le:Mobius}
Denote by $\mu$ the M\"obius function of $\br(\iot)$. Then, $\mu(u,v)
= Q_{u,v}(0)$ for all $u,v\in \iot$.
\begin{proof}
Let us induct on $\rho(v)$. The assertion
holds for $\rho(v)=0$ because $Q_{\id,\id}(q)=R_{\id,\id}(q)=1$. We shall
demonstrate that $\mu(u,v)$ satisfies the recursion
for $Q_{u,v}(0)$ derived from Proposition \ref{pr:Q-recursion}. 

Borrowing terminology from \cite{hultman}, call $[u,v]$ {\em full} if
every twisted involution in the interval $[u,v]$ is in fact a twisted
identity. Combining Philip Hall's theorem (see e.g.\ \cite[Proposition
3.8.5]{stanley}) with
\cite[Theorem 4.12]{hultman} shows that
\[
\mu(u,v) = 
\begin{cases}
(-1)^{\rho(v)-\rho(u)}& \text{if $[u,v]$ is full,}\\
0 & \text{otherwise.}
\end{cases}
\]
Pick $s\in \DR(v)$. In case $u* s = u$, $[u,v]$
is not full,
and $\mu(u,v)=0$ as 
desired. If $u* s > u$, it follows from \cite[Lemma 4.10]{hultman} that $[u,v* s]$ is full if and only if $[u,v]$ is
full. Thus, $\mu(u,v)=-\mu(u,v* s)$, and we are done. Finally,
suppose $u*s < u$. If $[u*s,v*s]$ is full then $[u,v]$ is also full, again by \cite[Lemma 4.10]{hultman}. On the other hand, \cite[Theorem 4.9]{hultman} implies
  that $\mu(u*s,v)=-\mu(u,v)$, so if $[u*s,v*s]$ (and
  therefore $[u*s,v]$) is not full, then $[u,v]$ cannot be full
  either. Completing the proof, we conclude $\mu(u,v)=\mu(u*s,v*s)$.
\end{proof}
\end{lemma}

\begin{lemma}\label{le:Q-sum}
For all $v\in \iot$, 
\[
\sum_{u\le v}Q_{u,v}(q)=q^{\rho(v)}.
\]
\begin{proof}
We prove the lemma using induction on $\rho(v)$. Given $s\in
\DR(v)$, partition $I_v$ into three sets:
\[
\begin{split}
A & = \{u\le v\mid u* s < u\},\\
B & = \{u\le v\mid u* s > u\},\\
C & = \{u\le v\mid u* s = u\}.
\end{split}
\]
By the lifting property \cite[Lemma 2.7]{hultman}, the map $u\mapsto
u*s$ is a bijection between $A$ and $B$. The 
recursion in Proposition \ref{pr:Q-recursion} therefore yields
\[
\begin{split}
\sum_{u\le v}Q_{u,v}(q) &=  \sum_{u\in A}Q_{u* s,v* s}(q)\\
 & \,\,\,\, +\sum_{u\in
    B}(qQ_{u* s,v* s}(q)+(q-1)Q_{u,v* s}(q))\\
 & \,\,\,\, +\sum_{u\in
    C}qQ_{u,v* s}(q) \\
&= \sum_{\ontop{u\in A}{u \le v* s}}qQ_{u,v* s}(q)+ \sum_{\ontop{u\in
    B}{u\le v* s}}(1+q-1)Q_{u,v* s}(q)+\sum_{\ontop{u\in C}{u\le v* s}}
    qQ_{u,v* s}(q)\\ 
&= q\sum_{u\le v* s}Q_{u,v* s}(q),
\end{split}
\]
proving the claim. 
\end{proof}
\end{lemma}

\begin{lemma}\label{temp2}
We have $P_{u,v}(0) = 1$ whenever $u\le v$ in $\iot$.
\end{lemma}
\begin{proof}
The assertion is clear if $u=v$, and we employ induction on $\rho(v)-\rho(u)$.

Vogan's \cite[Corollary 6.12]{vogan} translates to
\[
q^{\rho(v)-\rho(u)}P_{u,v}(q^{-1}) = \sum_{u\le w \le v} Q_{u,w}(q)P_{w,v}(q).
\]
%Using the recursion formula for $R$-polynomials and results from
%\cite{hultman}, one proves by
%induction over $\rho(v)$ 
The left hand side is a polynomial with zero constant term. Hence, Lemma
\ref{le:Mobius} implies 
\[
P_{u,v}(0) = - \sum_{u< w \le v} \mu(u,w) = \mu(u,u) = 1,
\]
as desired.

\end{proof}
\end{lemma}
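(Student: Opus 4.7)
The plan is to proceed by induction on $\rho(v) - \rho(u)$, the base case $u = v$ being immediate from $P_{u,u} = 1$. For the inductive step, I would start from Vogan's defining identity for KLV polynomials (\cite[Corollary 6.12]{vogan}), which, after rewriting in terms of the $Q$-polynomials via $Q_{u,w}(q) = (-q)^{\rho(w)-\rho(u)} R_{u,w}(q^{-1})$, takes the form
\[
q^{\rho(v)-\rho(u)} P_{u,v}(q^{-1}) = \sum_{u \le w \le v} Q_{u,w}(q) P_{w,v}(q).
\]
The key observation is that the degree bound $\deg P_{u,v} \le (\rho(v)-\rho(u)-1)/2$, strictly less than $\rho(v)-\rho(u)$ when $u < v$, forces the left hand side to be a polynomial in $q$ with vanishing constant term.

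Setting $q = 0$ and invoking Lemma \ref{le:Mobius} to identify $Q_{u,w}(0) = \mu(u,w)$, together with the inductive hypothesis $P_{w,v}(0) = 1$ for all $w$ strictly above $u$, the right hand side collapses to
\[
\mu(u,u) P_{u,v}(0) + \sum_{u < w \le v} \mu(u,w) = P_{u,v}(0) + \sum_{u < w \le v} \mu(u,w).
\]
Since $u < v$, the defining M\"obius identity $\sum_{u \le w \le v} \mu(u,w) = 0$ gives $\sum_{u < w \le v} \mu(u,w) = -1$, whence $P_{u,v}(0) = 1$.

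The main obstacle really lies in the preliminary work, not in this lemma itself: one needs the identification of $Q_{u,w}(0)$ with the M\"obius function of $\br(\iot)$ (Lemma \ref{le:Mobius}) and a usable closed form of Vogan's recursion. With those in hand, the argument mirrors the well known template for the classical fact $P_{u,v}(0) = 1$ in ordinary Kazhdan--Lusztig theory; the only bookkeeping point is verifying that the constant term on the left hand side vanishes, which hinges on the strict inequality in the KLV degree bound available as soon as $u < v$.
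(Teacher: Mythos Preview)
Your argument is correct and follows essentially the same route as the paper's own proof of Lemma~\ref{temp2}: induction on $\rho(v)-\rho(u)$, the $Q$-polynomial form of Vogan's identity, vanishing of the constant term on the left, Lemma~\ref{le:Mobius}, and the defining M\"obius relation. The only difference is cosmetic --- you spell out explicitly why the left-hand side has zero constant term and where the induction hypothesis enters, whereas the paper leaves these implicit.
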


We are finally in position to prove the main results. Since all
necessary technical prerequisites have been established, the
corresponding arguments 
from \cite{CP} can now be transferred to our setting more or less verbatim.

\begin{theorem}\label{th:regtoratsmooth}
Suppose $u,v\in \iot$, $u\le w$. The following conditions are equivalent:
\begin{itemize}
\item[(i)] The degree of $v$ in $\BG(w)$ is $\rho(w)$ for all $u\le
  v\le w$.
\item[(ii)] The KLV polynomials satisfy $P_{v,w}(q)=1$ for all $u\le
  v\le w$. That is, the orbit closure $\COO{w}$ is rationally smooth at
  $\OO_u$.
\end{itemize}
\begin{proof}
Define
\[
f_{u,w}(q)=q^{\rho(w)-\rho(u)}(P_{u,w}(q^{-2})-1).
\]
The $P$-polynomials have nonnegative coefficients. By Lemma \ref{temp2},
$f_{u,w}(q)$ too is a polynomial with nonnegative
coefficients. Since it has vanishing constant term,
$f_{u,w}^\prime(1)=0$ if and only if $f_{u,w}(q)=0$ which, in turn, is
equivalent to $P_{u,w}(q)=1$.

Now,
\[
f_{u,w}^\prime(1) = (\rho(w)-\rho(u))(P_{u,w}(1)-1) - 2P^\prime_{u,w}(1).
\]
Since $Q_{u,w}(1)=\delta_{u,w}$, we have 
\[
\begin{split}
-2P^\prime_{u,w}(1) & =\frac{d}{dq} P_{u,w}(q^{-2})\arrowvert_{q=1} \\
& = 2(\rho(u)-\rho(w))P_{u,w}(1)+2\sum_{u\le v \le
 w}Q^\prime_{u,v}(1)P_{v,w}(1)+ 2P^\prime_{u,w}(1).
\end{split}
\]
Hence,
\[
f_{u,w}^\prime(1)= \rho(u)-\rho(w)+\sum_{u\le v\le w}Q^\prime_{u,v}(1)P_{v,w}(1).
\]

To begin with, assume (ii) holds. Then,
\[
\rho(w)-\rho(v) = \sum_{v \le v^\prime \le w}Q_{v,v^\prime}^\prime(1) 
\]
for all $u\le v \le w$. Condition (i) now follows from Lemma
\ref{le:edge} together with Lemma \ref{le:comparability}.

Finally, let us prove (i) $\Rightarrow$ (ii) by induction on
$\rho(w)-\rho(u)$. Suppose $u<v\le w$ in $\br(\iot)$. By Lemma
\ref{le:edge} and the induction assumption, 
$Q^\prime_{u,v}(1)P_{v,w}(1)$ is one if $\{u,v\}$ is an edge in
$\BG(w)$, zero otherwise. Since $\deg(u)=\rho(w)$, $u$ has exactly
$\rho(w)-\rho(u)$ neighbours $v$ such that $u<v$. We conclude
$f_{u,w}^\prime(1)=0$ as desired. 
\end{proof}
\end{theorem}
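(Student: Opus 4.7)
The plan is to mimic Carrell and Peterson by introducing the auxiliary polynomial
$$f_{u,w}(q) = q^{\rho(w)-\rho(u)}\bigl(P_{u,w}(q^{-2})-1\bigr).$$
Because $P_{u,w}$ has nonnegative coefficients and $P_{u,w}(0)=1$ by Lemma \ref{temp2}, $f_{u,w}$ is a polynomial with nonnegative coefficients and vanishing constant term; consequently $f_{u,w}\equiv 0$, $f'_{u,w}(1)=0$ and $P_{u,w}(q)=1$ are mutually equivalent. The technical heart of the proof is then to express $f'_{u,w}(1)$ in a form matchable against vertex degrees in $\BG(w)$. To do so, I would substitute $q\mapsto q^2$ in the identity $q^{\rho(w)-\rho(u)}P_{u,w}(q^{-1}) = \sum_{u\le v\le w}Q_{u,v}(q)P_{v,w}(q)$ of Lemma \ref{temp2}, differentiate at $q=1$, and use $Q_{u,v}(1)=\delta_{u,v}$ to cancel the $P'_{u,w}(1)$ terms that show up on both sides. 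Combining with the product-rule expansion of $f'_{u,w}(1)$ directly from the definition yields the clean formula
$$f'_{u,w}(1) \;=\; \rho(u) - \rho(w) \;+\; \sum_{u\le v\le w}Q'_{u,v}(1)\,P_{v,w}(1). \qquad (\star)$$

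For (ii) $\Rightarrow$ (i), I would apply $(\star)$ not only at $u$ but at every $v\in [u,w]$ in the role of $u$: since $P_{v',w}=1$ for all $v\le v'\le w$ by hypothesis, $f'_{v,w}(1)=0$, yielding $\rho(w)-\rho(v) = \sum_{v\le v'\le w}Q'_{v,v'}(1)$. By Lemma \ref{le:edge} this is exactly the number of neighbours of $v$ in $\BG(w)$ lying strictly above $v$, and by Lemma \ref{le:comparability} there are $\rho(v)$ neighbours lying below, so the total degree of $v$ is $\rho(w)$. For (i) $\Rightarrow$ (ii), I would induct on $\rho(w)-\rho(u)$, the base $u=w$ being trivial. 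Hypothesis (i) passes to any $u'$ with $u<u'\le w$, so by induction $P_{v,w}=1$ for all $u<v\le w$; noting that $Q'_{u,u}(1)=0$, $(\star)$ collapses to $\rho(u)-\rho(w)$ plus the number of upward neighbours of $u$ in $\BG(w)$. By Lemma \ref{le:comparability} the downward neighbours number $\rho(u)$, and the degree hypothesis then forces the upward count to equal $\rho(w)-\rho(u)$; hence $f'_{u,w}(1)=0$ and $P_{u,w}=1$.

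The principal obstacle is the derivation of $(\star)$: one must carefully track the cancellation of $P'_{u,w}(1)$ between the two sides of the differentiated Vogan identity and reconcile it with the product-rule expansion of $f'_{u,w}(1)$, including the change of variable $q\mapsto q^2$. Once $(\star)$ is in hand, however, both implications reduce to straightforward edge-counting via Lemmas \ref{le:edge} and \ref{le:comparability}, so the substantive technical work has been front-loaded into those lemmas and into Lemma \ref{temp2}.
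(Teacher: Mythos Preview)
Your proposal is correct and follows essentially the same approach as the paper's own proof: you introduce the identical auxiliary polynomial $f_{u,w}$, derive the key identity $(\star)$ by differentiating the Vogan identity (after the substitution $q\mapsto q^2$) and using $Q_{u,v}(1)=\delta_{u,v}$, and then carry out both implications via the same edge-counting arguments based on Lemmata \ref{le:edge} and \ref{le:comparability}. The only cosmetic difference is that the paper records the intermediate step $f_{u,w}'(1)=(\rho(w)-\rho(u))(P_{u,w}(1)-1)-2P'_{u,w}(1)$ explicitly before arriving at $(\star)$, whereas you summarise that manipulation verbally.
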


\begin{theorem}\label{th:rank_symmetry}
For $w\in \iot$, the following are equivalent:
\begin{itemize}
\item[(i)] The interval $[\id,w] = \br(I_w)$ has equally many elements of rank $i$ as of rank $\rho(w)-i$.
\item[(ii)] The graph $\BG(w)$ is regular.
\item[(iii)] $P_{u,w}(q)=1$ for all $u\leq w$.
\end{itemize}
\end{theorem}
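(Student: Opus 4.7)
My plan is to prove the cyclic chain $(\mathrm{i})\Rightarrow(\mathrm{ii})\Rightarrow(\mathrm{iii})\Rightarrow(\mathrm{i})$, with the middle implication essentially free from Theorem \ref{th:regtoratsmooth}.

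For $(\mathrm{i})\Rightarrow(\mathrm{ii})$, I would set $a_i=|\{v\le w\mid \rho(v)=i\}|$ and double count edges in $\BG(w)$. By Lemma \ref{le:comparability} each $v\le w$ has exactly $\rho(v)$ neighbours below it, so the sum of all degrees equals $2\sum_{v\le w}\rho(v)=2\sum_i i\,a_i$. Rank symmetry $a_i=a_{\rho(w)-i}$ makes this sum equal to $\rho(w)\sum_i a_i=\rho(w)|I_w|$, so the \emph{average} degree in $\BG(w)$ is $\rho(w)$. Theorem \ref{th:inequality} says every individual degree is at least $\rho(w)$, so the average can be attained only if each degree equals $\rho(w)$; hence $\BG(w)$ is regular.

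For $(\mathrm{ii})\Rightarrow(\mathrm{iii})$, note that $w$ is the maximum of $I_w$ and so has no neighbour above it; by Lemma \ref{le:comparability} it has exactly $\rho(w)$ neighbours below it. If $\BG(w)$ is regular, the common degree must therefore be $\rho(w)$, and Theorem \ref{th:regtoratsmooth} applied with $u=\id$ yields $P_{v,w}(q)=1$ for all $v\le w$.

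The interesting step is $(\mathrm{iii})\Rightarrow(\mathrm{i})$, which I would do by summing identities rather than by topology. Vogan's defining equation (the identity whose coefficient-wise form is quoted in Section \ref{se:KLV}) rewrites, after the substitution $q\mapsto q^{-1}$ and using the definition of $Q$, as
\[
q^{\rho(w)-\rho(u)}P_{u,w}(q^{-1})=\sum_{u\le v\le w}Q_{u,v}(q)P_{v,w}(q).
\]
Under (iii) this collapses to $\sum_{u\le v\le w}Q_{u,v}(q)=q^{\rho(w)-\rho(u)}$ for every $u\le w$. Setting $F_w(q)=\sum_{v\le w}q^{\rho(v)}$ and interchanging the order of summation, Lemma \ref{le:Q-sum} gives
\[
F_w(q)=\sum_{v\le w}\sum_{u\le v}Q_{u,v}(q)=\sum_{u\le w}q^{\rho(w)-\rho(u)}=q^{\rho(w)}F_w(q^{-1}),
\]
so $F_w$ is palindromic, which is condition (i).

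The real obstacle, if any, is simply making sure that the Vogan identity is applied in the correct normalisation (the $R$ versus $Q$ substitution and the length conventions of Proposition \ref{pr:theta}(v)); everything else is formal, since Theorem \ref{th:regtoratsmooth}, Theorem \ref{th:inequality}, Lemma \ref{le:comparability}, and Lemma \ref{le:Q-sum} already do the technical work.
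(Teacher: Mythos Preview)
Your proof is correct and follows essentially the same route as the paper: the same double-count with Lemma \ref{le:comparability} and Theorem \ref{th:inequality} for (i)$\Rightarrow$(ii), the same appeal to Theorem \ref{th:regtoratsmooth} for (ii)$\Rightarrow$(iii), and the same combination of the Vogan identity $q^{\rho(w)-\rho(u)}P_{u,w}(q^{-1})=\sum_{u\le v\le w}Q_{u,v}(q)P_{v,w}(q)$ with Lemma \ref{le:Q-sum} for (iii)$\Rightarrow$(i). The only cosmetic difference is that the paper packages the last step as the unconditional symmetry of $F_w(q)=\sum_{u\le w}P_{u,w}(q)q^{\rho(u)}$ and then specialises to $P_{u,w}=1$, whereas you specialise first; the computation is the same.
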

\begin{proof}
(i) $\Rightarrow$ (ii): Let $n(i)$ denote the number of
  elements of rank $i$ in $[e,w]$. Now, using Lemma
  \ref{le:comparability} and Theorem \ref{th:inequality}, we count the
  edges in $\BG(w)$ in two ways and obtain
\[
\sum_{i=0}^{\rho(w)}n(i) i \ge \sum_{i=0}^{\rho(w)} n(i)(\rho(w)-i)
\]
with equality if and only if $\BG(w)$ is $\rho(w)$-regular. However,
if $n(i)=n(\rho(w)-i)$ for all $i$, then equality does hold.

(ii) $\Rightarrow$ (iii): This follows from Theorem
\ref{th:regtoratsmooth}. 

(iii) $\Rightarrow$ (i): We claim that
\[
F_w(q) = \sum_{u\leq w}P_{u,w}(q)q^{\rho(u)}
\]
is a symmetric polynomial, i.e.\ $F_w(q) =
q^{\rho(w)}F_w(q^{-1})$. If the $P$-polynomials are all $1$, this means 
\[
\sum_{u\le w}q^{\rho(u)} = \sum_{u\le w}q^{\rho(w)-\rho(u)}.
\]
It therefore remains to verify the claim. Observe that
\[
\begin{split}
q^{\rho(w)}F_w(q^{-1}) & = \sum_{u\le
  w}q^{\rho(w)-\rho(u)}P_{u,w}(q^{-1})\\
& =\sum_{u\le w}\sum_{u\le v\le
  w}Q_{u,v}(q)P_{v,w}(q) \\
& = \sum_{v\le w}P_{v,w}(q)\sum_{u\le v}Q_{u,v}(q)\\
& = F_w(q),
\end{split}
\]
where the last equality follows from Lemma \ref{le:Q-sum}.
\end{proof}

To illustrate these results, consider Figure \ref{fi:bg}. The interval
$[\id,w]$ has three elements of rank three but only two of rank
$\rho(w)-3=1$. By Theorem \ref{th:rank_symmetry}, $\COO{w}$ is
rationally singular. A more careful inspection of the graph shows that
$s_5s_1$ and $e$ both have degree five whereas all other vertices have
degree $\rho(w)=4$. By Theorem \ref{th:regtoratsmooth}, the rationally
singular locus of $\COO{w}$ therefore is
$\OO_{s_5s_1}\cup\OO_e$. Also, observe that the degree never
decreases as we move down in the graph. This phenomenon is explained
in the next section.

\section{Sufficiency of the bottom vertex}\label{se:bottom}
In this final section, the criterion given in Theorem
\ref{th:regtoratsmooth} is significantly improved in the special case
$G=\SL_{2n}(\C)$, $K=\Sp_{2n}(\C)$. In that case, as we shall see,
whether or not an orbit closure $\COO{w}$ is rationally smooth at
$\OO_u$ is determined by the degree of $u$ alone (Corollary
\ref{co:bottom} below). The corresponding
statement for Schubert varieties is known to be true in type $A$
\cite{deodhar} but false in general (see \cite{BG} for some
elaboration on this). Necessarily, therefore, our arguments must be
type specific since they cannot possibly be extended to the situation
in Example \ref{ex:Schubert} for arbitrary $G^\prime$. 

\subsection{Notation and preliminaries}
Let us spend a few lines fixing notation with respect to the
combinatorics of symmetric groups. 
%At first glance, it may seem that we improperly
%redefine concepts that we have used in the previous sections. We
%justify this abuse of notation in Proposition \ref{pr:notation} below.

We work in the set $\fix$ of fixed point free involutions on
$\{1, \dots, 2n\}$. Let $\star$ denote the conjugation action
from the right 
by the symmetric group $S_{2n}$ on itself, i.e.\ $\sigma\star \pi =
\pi^{-1} \sigma \pi$. Then, $\fix = w_0\star S_{2n}$, where $w_0$ is the
reverse permutation $i\mapsto 2n+1-i$.

If $t=(a,b)\in S_{2n}$ is a transposition and $u\in \fix$, then
$u\star t= u$ if and only if $t$ is a $2$-cycle in the cycle decomposition of
$u$. If $u\star t\neq u$, the decompositions into $2$-cycles of
$u$ and $u\star t$ are as follows:
\[
\begin{split}
u &= (a, u(a))(b, u(b))\cdots,\\
u\star t &= (a, u(b))(b, u(a))\cdots,
\end{split}
\] 
where the dots denote the remaining $2$-cycles (that both involutions
have in common). In particular, there is exactly one transposition
$t^\prime\neq t$ such that $u\star t^\prime = u\star t$, namely
$t^\prime=(u(a), u(b))$.

Let $\preceq$ denote the dual of the subposet of the Bruhat order on $S_{2n}$ induced by
$\fix$. The bottom element of this poset is $w_0$. Observe that if
$u\neq u\star t$, then $u\star t \succ u$ iff $t$ is an {\em inversion} of $u$
(meaning $t=(a,b)$ with $a<b$ and $u(a)>u(b)$). If $s =(i,i+1)$ is an
adjacent transposition, then $s$ 
is a {\em descent} if it is an inversion; otherwise $s$ is an
{\em ascent}.

%The following is a special case of \cite[Lemma 3.9]{hultman}:
%\begin{lemma}[Lifting property]
%Suppose $u\preceq w$. If $s=(i, i+1)$ is an adjacent transposition such
%that $w\star s \prec w$, then $u\star s \preceq w$.
%\end{lemma}

For $u\in \fix$, $1\le i,j\le 2n$, define 
\[
u_{(i,j)} = |\{x\in \{1, \dots, 2n\} \mid x\le i\text{ and } u(i)\ge j\}|.
\]
Thus, $u_{(i,j)}$ is the number of dots weakly northwest of $(i,j)$ in
the permutation diagram of $u$. 
\begin{lemma}[Standard criterion; Theorem 2.1.5 in \cite{BB}]
For $u,w\in \fix$, we have $u\preceq w$ iff $u_{(i,j)}\ge w_{(i,j)}$ for
all $(i,j)$.
\end{lemma}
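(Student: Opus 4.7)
The plan is to reduce this to the classical tableau criterion of Ehresmann for the full Bruhat order on $S_{2n}$, which is exactly what is catalogued as Theorem 2.1.5 in \cite{BB}. That theorem states: for arbitrary $u,w \in S_{2n}$, one has $u \leq_B w$ in the standard Bruhat order if and only if $u_{(i,j)} \leq w_{(i,j)}$ for all $(i,j)$. I would take this result as a black box, since it is a purely $S_{2n}$ fact having nothing to do with involutions.

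From there, two observations finish the argument. First, the Bruhat order on $\fix$ used throughout the paper is, by definition, the subposet of $S_{2n}$ induced on $\fix$, so Ehresmann's criterion specialises verbatim to pairs in $\fix$. Second, the relation $\preceq$ was defined as the \emph{dual} of this induced order. Hence $u \preceq w$ translates into $w \leq_B u$ in $S_{2n}$, which by the tableau criterion is equivalent to $w_{(i,j)} \leq u_{(i,j)}$ for all $(i,j)$, i.e.\ to $u_{(i,j)} \geq w_{(i,j)}$ for all $(i,j)$. That is precisely the asserted characterisation.

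The only subtlety is bookkeeping for the order reversal: one must be careful that the northwest-dot counts $u_{(i,j)}$ are defined directly from the permutation $u$ itself (not with respect to $w_0$), so the inequality flips cleanly under dualisation without any extra shift. Aside from this cosmetic point there is no real obstacle, since the lemma is essentially a dictionary entry between the dualised restricted order $\preceq$ on $\fix$ and the standard tableau criterion for $S_{2n}$; the genuine combinatorial content lies in Ehresmann's theorem, which is imported from \cite{BB}.
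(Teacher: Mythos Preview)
Your proposal is correct and matches the paper's approach: the paper gives no proof at all, simply citing Theorem~2.1.5 of \cite{BB} and leaving the reversal of the inequality (coming from the fact that $\preceq$ is the \emph{dual} of the induced Bruhat order) implicit. Your write-up just makes that dualisation bookkeeping explicit, which is exactly the intended reading.
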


For $w\in \fix$,
define the {\em Bruhat graph} $\bg(w)$ as the graph whose vertex set
is $I_w = \{u\in \fix\mid u\preceq w\}$ and $\{u,v\}$ is an edge iff
$u\neq v=u\star t$ for some transposition $t$. Thus, each edge has
exactly two transpositions associated with it, and the graph is simple
(no loops or multiple edges). If $w$ is understood from the context
and $u\preceq w$, let $\out(u)$ denote the set of edges incident to $u$ in
$\bg(w)$. Also, define $\deg(u)=|\out(u)|$.

\begin{proposition}\label{pr:notation}
Suppose $W=A_{2n-1}\cong S_{2n}$ with $\theta:W\to W$ given by the
unique nontrivial involution of the Dynkin diagram. Then, $x\mapsto
w_0x$ defines a bijection $\fix \to 
\iot$. Moreover, the bijection is an isomorphism of Bruhat graphs,
i.e.\ $u\preceq w \Leftrightarrow w_0u\le w_0w$ and $w_0(w\star t) =
w_0w* t$.
\begin{proof}
This is immediate from the well known facts that
$\theta(x)=w_0xw_0$ and that $x\mapsto w_0x$ is an antiautomorphism of
$\br(W)$. 
\end{proof}
\end{proposition}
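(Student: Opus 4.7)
The plan is to verify the three assertions in turn, all of which reduce to three standard identities: (a) $\theta(x) = w_0 x w_0$ for the unique nontrivial diagram involution of $A_{2n-1}$, (b) $w_0^2 = \id$, and (c) left multiplication by $w_0$ is an order-reversing bijection on $\br(W)$.

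For the bijection, I would use $\fix = w_0 \star S_{2n}$ to write an arbitrary $x \in \fix$ as $x = \pi^{-1} w_0 \pi$ for some $\pi \in S_{2n}$ and compute
\[
w_0 x \;=\; w_0 \pi^{-1} w_0 \pi \;=\; \theta(\pi^{-1})\,\pi \;\in\; \iot.
\]
The opposite direction is the same calculation read backwards: if $y = \theta(\pi^{-1})\pi \in \iot$, then $w_0 y = \pi^{-1} w_0 \pi \in \fix$. Since $w_0^2 = \id$, the map $x \mapsto w_0 x$ is its own inverse, so it is a bijection $\fix \to \iot$.

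For the order statement, recall that $\preceq$ was defined as the dual of the Bruhat order restricted to $\fix$, so $u \preceq w$ means $w \le u$ in $\br(W)$. Applying the antiautomorphism from (c) turns this into $w_0 u \le w_0 w$, which is exactly what is asserted. For the edges, a transposition $t$ satisfies $t^{-1} = t$, so the star action on $\fix$ is $w \star t = t w t$, while the twisted action is $(w_0 w) * t = \theta(t)(w_0 w)\, t$. Using $\theta(t) = w_0 t w_0$ and $w_0^2 = \id$,
\[
w_0 (w \star t) \;=\; w_0 t w t \;=\; (w_0 t w_0)(w_0 w)\, t \;=\; \theta(t)(w_0 w)\, t \;=\; (w_0 w) * t.
\]

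There is no genuine obstacle here: the whole proposition is a dictionary between $\fix$ and $\iot$ that becomes transparent once $\theta(\cdot) = w_0(\cdot)w_0$ is in hand. The only minor bookkeeping point is to keep track of the convention that $\preceq$ is dual to Bruhat, so that the order-reversal by $w_0$ produces the stated equivalence in the correct direction rather than its opposite.
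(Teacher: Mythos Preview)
Your proof is correct and follows exactly the same approach as the paper: both rely on the identities $\theta(x)=w_0xw_0$ and on $x\mapsto w_0x$ being an antiautomorphism of $\br(W)$. The only difference is that you spell out the short computations that the paper leaves as ``immediate.''
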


\subsection{An injective map}

Suppose $w\succeq u\neq w_0$ and let $r=(i,j)$, $i<j$, be a
transposition such that $u\star r \prec u$. Let $a=u(i)$ and
$b=u(j)$. Thus, $a<b\neq i$.

For a transposition $t=(x,y)$, we use the notation $\supp(t)=\{x,y\}$.

\begin{definition} A transposition $t$ is called {\em compatible}
  (with respect to $u$ and $r$) if either $\supp(t)\cap \{a,b,i,j\} =
  \emptyset$ or $\supp(t)\cap\{i,j\} \neq \emptyset$.
\end{definition}

Given an edge $e\in \out(u)$ there are precisely two
transpositions $t$ and $t^\prime\neq t$ such that $e=\{u,u\star
t\}=\{u,u\star t^\prime\}$. At least one of them is
compatible.

\begin{definition} For any edge $e\in \out(u)$, let $t_e$ be a compatible transposition such that $e=\{u,u\star t_e\}$. 
\end{definition}

\begin{definition} \label{de:epsilon}
Given $e\in \out(u)$, define $\epsilon(e) = \{u\star r, u\star
r \tau_e\}$, where 
\[
\tau_e = 
\begin{cases}
rt_er & \text{if $u\star t_er\preceq w$,}\\
t_e & \text{otherwise.}
\end{cases}
\] 
\end{definition}

The point of all this is the following:

\begin{theorem}\label{th:main}
Definition \ref{de:epsilon} defines an injective map
$\epsilon:\out(u)\to \out(u\star r)$.
\begin{proof}
This follows from Lemmata \ref{le:well-def}, \ref{le:map} and
\ref{le:injective} below. 
\end{proof}
\end{theorem}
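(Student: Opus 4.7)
The plan is to establish the theorem by verifying separately (i) that $\epsilon$ does not depend on the choice of compatible $t_e$, (ii) that $\epsilon(e)$ is genuinely an edge of $\bg(w)$ incident to $u\star r$, and (iii) that $\epsilon$ is injective. These three claims correspond to the three lemmas cited in the proof, and I would address them in this order so that the identities established for each can be reused.

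For well-definedness, an edge $e=\{u,u\star t\}$ arises from exactly two transpositions $t=(x,y)$ and $t^\prime=(u(x),u(y))$, so the issue is nontrivial only when both are compatible. If $\supp(t)\cap\{a,b,i,j\}=\emptyset$, then since $u(i)=a$ and $u(j)=b$, also $\supp(t^\prime)\cap\{a,b,i,j\}=\emptyset$; moreover $r=(i,j)$ commutes with both $t$ and $t^\prime$, so $rtr=t$ and $rt^\prime r=t^\prime$, and the branching condition ``$u\star t_e r\preceq w$'' as well as the output edge depend only on the common value $u\star t=u\star t^\prime$. If instead $\supp(t)\cap\{i,j\}\neq\emptyset$, a case-by-case check (splitting on how many of $i,j$ lie in $\supp(t)$) shows the output again coincides.

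For (ii), the non-triviality $u\star r\,\tau_e\neq u\star r$ follows by inspecting the $2$-cycle structure of $u\star r$, using that $t_e$ is not a $2$-cycle of $u$ and interacts with $\{i,j\}$ as dictated by compatibility. The Bruhat bound $u\star r\,\tau_e\preceq w$ is the substantive point; the key algebraic identity is
\[
(u\star r)\star(rt_er)\;=\;(u\star t_e)\star r,
\]
which is a direct computation using $r^2=1$. It shows that in the ``then'' branch $\epsilon(e)=\{u\star r,\,(u\star t_e)\star r\}$, whose non-$u\star r$ endpoint is $\preceq w$ by the branch hypothesis itself. In the ``else'' branch one has $\tau_e=t_e$ and must show $u\star r\,t_e\preceq w$; I would verify this via the Standard Criterion, comparing dot-count arrays $(\cdot)_{(p,q)}$ and exploiting the failure of the ``then'' hypothesis.

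Injectivity is where I expect the real work. If $\epsilon(e_1)=\epsilon(e_2)$, then $\tau_{e_1}$ and $\tau_{e_2}$ are the (at most) two transpositions producing the common edge from $u\star r$. Reversing the case split of Definition~\ref{de:epsilon}, i.e.\ conjugating back by $r$ in the ``then'' branch, recovers $t_{e_1}$ and $t_{e_2}$ as transpositions yielding a common edge at $u$, which forces $e_1=e_2$. The main obstacle will be ruling out that $e_1$ lies in the ``then'' branch while $e_2$ lies in the ``else'' branch, since a naive comparison could permit such a cross-branch collision. The compatibility condition does most of the work here, constraining how $\supp(t_e)$ can meet $\{i,j\}$ and hence how conjugation by $r$ can identify candidate transpositions; nevertheless the careful bookkeeping to close this step is likely the trickiest part of the proof.
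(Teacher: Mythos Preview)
Your three-part decomposition (well-definedness, image in $\out(u\star r)$, injectivity) matches the paper exactly, and your handling of (i) and of the ``then'' branch in (ii) is essentially the paper's. For the ``else'' branch of (ii) the paper does not invoke the Standard Criterion abstractly; it runs a six-case analysis on the shape of $t_e$ (disjoint from $\{i,j,a,b\}$; equal to $(i,j)$, $(i,b)$, $(j,a)$; or of the form $(i,k)$, $(j,k)$), in each case exhibiting directly that either $u\star t_er\preceq w$ or $u\star rt_e\preceq w$. Your plan to use the Standard Criterion here is plausible but would in practice unwind into the same case split.

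The genuine gap is in the cross-branch case of injectivity, which you rightly flag as the hardest but for which ``compatibility plus bookkeeping'' is not enough. After using compatibility and ruling out the easy possibilities, the collision forces $rt_{e^\prime}r=t_e$ with $\{t_e,t_{e^\prime}\}=\{(i,k),(j,k)\}$ for some $k\notin\{i,j,a,b\}$. One then checks that of the six elements of the dihedral orbit $u\star\langle r,t_e\rangle$, exactly five are known to lie in $I_w$, the missing one being $u\star rt_e=u\star t_{e^\prime}r$ (missing precisely because $e^\prime$ fell into the ``else'' branch). The paper closes this with a separate combinatorial lemma: if any five elements of such a size-six orbit lie in $I_w$, then so does the sixth. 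That lemma is proved via the Standard Criterion by identifying the orbit's $\preceq$-maximum and showing its dot-count at every position is the minimum of the dot-counts of two other orbit elements. You have not anticipated the need for this auxiliary fact, and without it the cross-branch collision cannot be excluded.

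A smaller point: your ``reverse the case split'' argument for the both-``else'' case is also incomplete. From $u\star rt_{e_1}=u\star rt_{e_2}$ one cannot immediately deduce $u\star t_{e_1}=u\star t_{e_2}$ unless both $t_{e_i}$ commute with $r$. The paper treats the noncommuting situation separately, using compatibility to force $\{t_{e_1},t_{e_2}\}$ to be $\{(i,b),(j,a)\}$ or $\{(i,a),(j,b)\}$, each yielding an immediate contradiction.
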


By Theorem \ref{th:main}, the degree can never decrease as we go down
along edges in a Bruhat graph. In particular, if a vertex has the
minimum possible degree, then so does every vertex above it:

\begin{corollary}\label{co:bottom}
We have $\deg(v)=\deg(w)$ for all $u\preceq v\preceq
w$ if and only if $\deg(u)=\deg(w)$. 
\end{corollary}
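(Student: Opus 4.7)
The forward implication is immediate: specialising to $v=u$ in the condition ``$\deg(v)=\deg(w)$ for all $u\preceq v\preceq w$'' gives $\deg(u)=\deg(w)$. The content lies in the reverse direction, and the strategy is to sandwich $\deg(v)$ between two bounds: an upper bound $\deg(v)\leq \deg(u)$ coming from Theorem \ref{th:main} applied along a saturated chain from $v$ down to $u$, and a lower bound $\deg(v)\geq \deg(w)$ coming from the fact that $\deg(w)$ is already the minimum possible degree in $\bg(w)$.

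For the upper bound, given $u\preceq v\preceq w$ with $\deg(u)=\deg(w)$, I would choose a saturated chain $u = u_0\prec u_1\prec\dots\prec u_k = v$ in the poset $\preceq$ on $I_w$. Each cover $u_i\prec u_{i+1}$ is realised by $u_i = u_{i+1}\star t_i$ for some transposition $t_i$ (covers lie among the edges of the Bruhat graph, by Lemma \ref{le:comparability} transported through Proposition \ref{pr:notation}). Theorem \ref{th:main}, applied with ``$u$''$=u_{i+1}$ and ``$r$''$=t_i$, produces an injection $\out(u_{i+1})\hookrightarrow \out(u_i)$, so $\deg(u_{i+1})\leq \deg(u_i)$. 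Iterating down the chain yields $\deg(v)\leq \deg(u)=\deg(w)$.

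For the lower bound, I would transport to the $\iot$ setting via the Bruhat-graph isomorphism in Proposition \ref{pr:notation}. Theorem \ref{th:inequality} gives $\deg(v)\geq \rho(w_0w)$ for every vertex $v$ of $\bg(w)$, while Lemma \ref{le:comparability} applied to the maximum vertex shows that $w$ itself saturates this bound: its $\rho(w_0w)$ Bruhat-graph neighbours all lie strictly below it, and it has none above since it is maximal in $I_w$. Thus $\deg(w)=\rho(w_0w)$ is the minimum degree in $\bg(w)$, and in particular $\deg(v)\geq \deg(w)$. Combining the two inequalities gives $\deg(v)=\deg(w)$.

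I do not expect a substantive obstacle here: Theorem \ref{th:main} is the whole engine, and once weak monotonicity of the degree along covers in $\preceq$ is in hand, the corollary follows at once, paired with the trivial lower bound. The only minor point worth verifying explicitly is that every cover in $\preceq$ is a $\star$-conjugation by a transposition, which is immediate from the general fact that covers appear as Bruhat-graph edges (Lemma \ref{le:comparability}).
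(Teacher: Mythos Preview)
Your proof is correct and matches the paper's approach: Theorem \ref{th:main} yields that the degree is weakly decreasing as one moves up in $\preceq$, and the squeeze $\deg(w)\le\deg(v)\le\deg(u)=\deg(w)$ finishes the argument. One small simplification: the lower bound $\deg(v)\ge\deg(w)$ follows from the \emph{same} monotonicity (apply Theorem \ref{th:main} along a saturated chain from $w$ down to $v$), so the appeal to Theorem \ref{th:inequality} and Lemma \ref{le:comparability} is unnecessary---indeed, the paper remarks that in this setting Theorem \ref{th:inequality} is itself a consequence of Theorem \ref{th:main}.
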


Thus, to determine whether condition (i) of Theorem
\ref{th:regtoratsmooth} is satisfied, it suffices to check the degree
of $u$.

\begin{remark}{\em 
The set $\F = \{w\in F_{2n} \mid i\le n \Rightarrow w(i) \ge n+1\}$ is
in natural bijective correspondence with $S_{2n}$ in a way which
identifies $\br(S_{2n})$ with $\preceq$. Restricted to
$w\in \F$, Corollary 
\ref{co:bottom} specialises to a result of Deodhar \cite{deodhar} for
type $A$ Schubert varieties. In that setting, our arguments are closely
related to work of Billey and Warrington \cite[\S 6]{BW}
}\end{remark}

\begin{remark}{\em 
Observe that for $G=\SL_{2n}(\C)$, $K=\Sp_{2n}(\C)$, Theorem
\ref{th:inequality} follows directly from Theorem \ref{th:main}. Thus, we have
reproven Brion's \cite[Theorem 2.5]{brion} in this case. 
}\end{remark}

\subsection{Proof of Theorem \ref{th:main}}

\begin{lemma}\label{le:well-def}
The set $\epsilon(e)$ is well defined, i.e.\ independent of the choice of
$t_e$.
\begin{proof}
This is clear if $\supp(t)\cap \{a,b,i,j\}=\emptyset$. If not, the only case
when both transpositions associated with $e$ are compatible 
is when $e = \{u,u\star t\}=\{u,u\star t^\prime\}$ for
$\{t,t^\prime\}=\{(i,b), (j, a)\}$. In this case, we have $u\star tr
= u\star t^\prime r$ and $u\star rt = u\star r = u\star rt^\prime$. 
\end{proof} 
\end{lemma}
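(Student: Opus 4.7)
The plan is a case analysis on the pair $\{t, t'\}$ of transpositions associated with the edge $e$ (recall $t' = (u(x), u(y))$ when $t = (x, y)$). I only need to worry about the situation in which both $t$ and $t'$ happen to be compatible; otherwise the choice of $t_e$ is forced and there is nothing to check.

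First I would handle the case $\supp(t) \cap \{a, b, i, j\} = \emptyset$. Because $u$ restricts to the involution $(a, i)(b, j)$ on $\{a, b, i, j\}$, this forces $\supp(t') \cap \{a, b, i, j\} = \emptyset$ as well, and both $t, t'$ then commute with $r$. The condition $u \star t_e r \preceq w$ that selects the formula for $\tau_e$ thus takes the same value for $t_e = t$ and $t_e = t'$, and the identity $u \star t = u \star t'$ combined with conjugation by $r$ shows that the resulting $u \star r \tau_e$ is the same either way.

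Next, when $\supp(t) \cap \{a, b, i, j\} \neq \emptyset$, compatibility of $t$ forces $\supp(t) \cap \{i, j\} \neq \emptyset$, and I would enumerate the possibilities. If $t = r$, then $t' = (a, b)$, whose support is disjoint from $\{i, j\}$ but meets $\{a, b, i, j\}$, so $t'$ is not compatible. Otherwise $\supp(t) = \{i, k\}$ with $k \notin \{i, j\}$ (or the symmetric case with $j$ in place of $i$), and then $t' = (a, u(k))$; compatibility of $t'$ forces $u(k) \in \{i, j\}$, i.e.\ $k \in \{a, b\}$, and $k = a$ would make $t$ a $2$-cycle of $u$, contradicting $u \star t \neq u$. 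This leaves only the exceptional pair $\{t, t'\} = \{(i, b), (j, a)\}$.

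In that exceptional case, the verification is direct. Since $u$ contains the cycles $(i, a)(j, b)$, the involution $u \star r$ contains $(j, a)(i, b)$, so both $(i, b)$ and $(j, a)$ are $2$-cycles of $u \star r$. This gives $(u \star r) \star t = (u \star r) \star t' = u \star r$, which handles the subcase $\tau_e = t_e$ immediately, and in the subcase $\tau_e = r t_e r$ it suffices to show $u \star t r = u \star t' r$, which follows at once from $u \star t = u \star t'$ by multiplying on the right by $r$. The main obstacle will be the combinatorial bookkeeping needed to isolate this exceptional pair; once that is done, the remaining verifications are essentially one-line calculations.
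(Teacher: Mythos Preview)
Your proof is correct and follows the same approach as the paper's: identify the unique exceptional compatible pair $\{(i,b),(j,a)\}$ and then verify directly that $u\star tr=u\star t'r$ and $u\star rt=u\star r=u\star rt'$. You supply more detail than the paper (which merely calls the disjoint-support case ``clear'' and asserts the exceptional pair without enumeration); one small presentational point is that in the exceptional case you might first note that $u\star tr=u\star t'r$ forces both choices of $t_e$ to select the same branch in the definition of $\tau_e$, before treating the branches separately.
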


\begin{lemma}\label{le:map}
For every $e\in \out(u)$, we have $\epsilon(e)\in \out(u\star r)$.
\begin{proof}
We must show that $u\star r \neq u\star r\tau_e \preceq w$.

First, assume $u\star r = u\star r\tau_e$. Then, $u\star t_e = u\star
r\tau_ert_e$. If $\tau_e = rt_er$, this means $u\star t_e = u$ which
contradicts the fact that $e\in \out(u)$. If, on the other hand,
$\tau_e=t_e$, then we conclude that $r$ and $t_e$ do not commute,
hence that $rt_ert_er = t_e$. But then, $u\star t_e r = u\star rt_ert_er
= u\star t_e\preceq w$ which contradicts $\tau_e=t_e$. Thus, $u\star
r \neq u \star r\tau_e$.

It remains to prove $u\star r\tau_e \preceq w$, i.e.\ that either $u\star
t_er\preceq w$ or $u\star rt_e\preceq w$ (or both). There are a few cases:\\

%If $u \succ u\star t_e$, then $u\star t_es \preceq u$ by the lifting
%property. Thus, we may 
%assume $u \prec u\star t_e$. In particular, $t_e$ is an inversion of
%$u$. There are a few cases:\\

{\sf Case 1.} If $\supp(t_e)\cap\{i,j,a,b\}=\emptyset$, then $u\star
t_e(i)<u\star t_e(j)$. Thus, $w\succeq u\star t_e\succ u\star t_e r$.\\ 

{\sf Case 2.} If $t_e = (i,j)$, then $u\star t_e r = u\preceq w$.\\

{\sf Case 3.} If $t_e = (i,b)$, then $u\star t_e(i)=j$ so that
$u\star t_er = u\star t_e \preceq w$.\\

{\sf Case 4.} If $t_e= (j,a)$, we again have $u\star
t_e(i)=j$.\\

{\sf Case 5.} If $t_e = (i,k)$ with $k\not \in \{j,a,b\}$,
then $rt_er = t_ert_e = (j, k)$ and $u\star rt_e = u\star t_ert_er$. Let $c =
u(k)$. 

We have $u\star t_e(i)=c$, $u\star t_e(j) =
b$, $u\star t_e(k)=a$, $u\star r(i)=b$ and $u\star r(k)=c$. If $k<j$,
it follows that $w\succeq u\star 
t_e \succ u \star t_ert_er = u\star rt_e$. Otherwise, $k>i$ and either
$w\succeq 
u\star r\succ u\star rt_e$ (if $b<c$) or $u\star t_er\prec u\star t_e$
(if $b>c$).\\  

%We have $u\star t_e(i) = c$, $u\star t_e(i+1) =
%b$ and $u\star t_e(k)=a$. If $u\star t_es \prec u\star t_e$, there is
%nothing to show. Otherwise, $b<c$ so that $a<c$. As a
%consequence of $u\star t_e \succ u$, we therefore have $k < i$. From $a<b$
%and $k<i+1$, we now deduce $w\succeq u\star t_e \succ u \star t_est_es = u\star
%st_e$. \\

{\sf Case 6.} If $t_e = (j,k)$ with $k\not \in \{i,a,b\}$,
then $rt_er = t_ert_e = (i, k)$ and $u\star rt_e = u\star
t_ert_er$. Again, let $c = u(k)$.

Now, $u\star t_e(i) = a$, $u\star t_e(j) = c$ and $u\star t_e(k) =
b$ so that $u\star t_er \succ u\star t_e$ implies $c<a<b$. It follows that
either $u\star rt_e = u\star t_ert_er \prec u\star t_e \preceq w$ (if $i<k$)
or $u\star r\succ u\star rt_e$ (if $k<j$).
%Now, $u\star t_e(i) = a$, $u\star t_e(j) = c$ and $u\star t_e(k) =
%b$ so that $u\star t_es \succ u\star t_e$ implies $c<a<b$. It then follows
%from $u\star t_e \succ u$ that $i+1<k$. This, however, implies $u\star
%st_e = u\star t_est_es \prec u\star t_e \preceq w$. 
\end{proof}
\end{lemma}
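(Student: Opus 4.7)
I need to verify two things: (A) $u\star r \neq u\star r\tau_e$, so that $\epsilon(e)$ is a genuine pair rather than a loop, and (B) $u\star r\tau_e \preceq w$, so the second endpoint actually lies in $I_w$.

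For (A), my plan is a case split on Definition~\ref{de:epsilon}, arguing by contradiction. If $\tau_e = rt_er$, then conjugating the presumed equation $u\star r = u\star r\tau_e$ by $r$ on the right immediately gives $u = u\star t_e$, contradicting $e\in\out(u)$. If $\tau_e = t_e$, the same conjugation gives $u = u\star rt_er$. When $r$ and $t_e$ commute, $rt_er = t_e$ and we again land on $u = u\star t_e$. Otherwise $\langle r, t_e\rangle \cong S_3$, and the identity $rt_ert_er = t_e$ (a direct calculation in $S_3$) lets me rewrite $u\star t_e r = u\star t_e$; since $u\star t_e \preceq w$ because $e$ is an edge of $\bg(w)$, this contradicts the hypothesis that placed us in the branch $\tau_e = t_e$, namely $u\star t_er\not\preceq w$.

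For (B), I would case-split on how $\supp(t_e)$ meets $\{i, j, a, b\}$, using compatibility of $t_e$ to exclude intersections that hit $\{a, b\}$ but miss $\{i, j\}$. In the easy cases --- $\supp(t_e)\cap\{i,j,a,b\} = \emptyset$, or $t_e = (i,j)$, or $t_e \in \{(i,b), (j,a)\}$ --- I would compute $(u\star t_e)(i)$ and $(u\star t_e)(j)$ explicitly and apply the inversion criterion ``$u\star t \prec u$ iff $t$ is not an inversion of $u$'' to obtain $u\star t_er\preceq u\star t_e\preceq w$, or $u\star t_er = u\preceq w$ when $t_e = (i,j)$. In either situation $u\star r\tau_e\preceq w$ holds no matter which branch of Definition~\ref{de:epsilon} defines $\tau_e$.

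The main obstacle is the remaining family $t_e = (i,k)$ or $t_e = (j,k)$ with $k\notin\{i,j,a,b\}$. Here $\langle r, t_e\rangle$ is a copy of $S_3$, the conjugate $rt_er = t_ert_e$ is the third transposition of the form $(j,k)$ or $(i,k)$ respectively, and $u\star rt_e = u\star t_ert_er$. Writing $c = u(k)$, I would tabulate the values of $u\star t_e$ and $u\star r$ at positions $i, j, k$, and then combine the signs of $c-a$ and $c-b$ with the relative position of $k$ to $i, j$ via the inversion criterion to show that in every subcase at least one of $u\star t_er$ or $u\star rt_e$ lies below $u\star t_e$ or $u\star r$ in $\preceq$. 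Since both $u\star t_e$ and $u\star r$ are $\preceq w$, this forces $u\star r\tau_e\preceq w$ in whichever branch of Definition~\ref{de:epsilon} applies, completing (B).
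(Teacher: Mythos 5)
Your overall route is the paper's own: first rule out the loop $u\star r=u\star r\tau_e$, then observe that by Definition~\ref{de:epsilon} it suffices to show that at least one of $u\star t_er$, $u\star rt_e$ lies below $w$, and finally split according to how $\supp(t_e)$ meets $\{i,j,a,b\}$. Your part (A) is complete and correct, and in the non-commuting branch you use exactly the paper's key identity $rt_ert_er=t_e$ to produce $u\star t_er=u\star t_e\preceq w$, contradicting the branch condition $u\star t_er\not\preceq w$. The easy cases of (B) ($\supp(t_e)$ disjoint from $\{i,j,a,b\}$, $t_e=(i,j)$, $t_e\in\{(i,b),(j,a)\}$) are also handled with enough detail, matching Cases 1--4 of the paper.

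The gap is that the substance of the lemma --- the family $t_e=(i,k)$ or $t_e=(j,k)$ with $k\notin\{i,j,a,b\}$ --- is only announced, not carried out: you assert that ``in every subcase at least one of $u\star t_er$ or $u\star rt_e$ lies below $u\star t_e$ or $u\star r$,'' but this is precisely the claim that needs proof, and it is where compatibility of $t_e$ and the hypothesis $u\star r\prec u$ actually get used. The assertion is true and your proposed tabulation does close it, but you should exhibit the subcases. For instance, with $t_e=(i,k)$ one has $rt_er=(j,k)$, $u\star rt_e=(u\star t_e)\star(j,k)=(u\star r)\star t_e$, and with $c=u(k)$ the values are $u\star t_e(i)=c$, $u\star t_e(j)=b$, $u\star t_e(k)=a$, $u\star r(i)=b$, $u\star r(k)=c$; then $k<j$ gives $u\star rt_e\prec u\star t_e\preceq w$, while $k>j$ gives $u\star rt_e\prec u\star r\preceq w$ if $b<c$ and $u\star t_er\prec u\star t_e\preceq w$ if $b>c$. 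Similarly, with $t_e=(j,k)$ one may assume $u\star t_er\succ u\star t_e$ (else done), which forces $c<a<b$, and then $i<k$ yields $u\star rt_e=(u\star t_e)\star(i,k)\prec u\star t_e$, while $k<i$ yields $u\star rt_e=(u\star r)\star t_e\prec u\star r$. Without this explicit check (or an equivalent argument), the proposal is a plan for the hard cases rather than a proof of them.
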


\begin{lemma}\label{le:injective}
If  $e\neq e^\prime$ for $e, e^\prime \in \out(u)$, then
$\epsilon(e)\neq \epsilon(e^\prime)$.
\begin{proof}
Suppose $e, e^\prime \in \out(u)$ and $e\neq e^\prime$. There are
three cases:\\

{\sf Case 1.} If $\tau_e= rt_er$ and $\tau_{e^\prime}=rt_{e^\prime}r$,
then $\epsilon(e) = \epsilon(e^\prime) \Leftrightarrow u\star t_er =
u\star t_{e^\prime}r \Leftrightarrow u\star t_e = u\star t_{e^\prime}$
which contradicts $e \neq e^\prime$.\\

{\sf Case 2.} Suppose $\tau_e= t_e$ and
$\tau_{e^\prime}=t_{e^\prime}$. Assume $\epsilon(e) =
\epsilon(e^\prime)$, i.e.\ $u\star rt_e = 
u\star rt_{e^\prime}$. If both $t_e$ and $t_{e^\prime}$ commute with
$r$ we argue as in the previous case. If not, since both $t_e$ and
$t_{e^\prime}$ are compatible, we either have $\{t_e,t_{e^\prime}\} = \{(i,b),(j, a)\}$ leading to the contradiction
$u\star t_e = u\star t_{e^\prime}$, or we have $\{t_e,
t_{e^\prime}\} = \{(i,a), (j, b)\}$ which implies the contradiction
$u=u\star t_e$.\\

{\sf Case 3.} Finally, assume $\tau_e= rt_er$ and
$\tau_{e^\prime}=t_{e^\prime}$. Then, the assumption $\epsilon(e) =
\epsilon(e^\prime)$ amounts to $u\star t_e = 
u\star rt_{e^\prime}r$. Suppose $rt_{e^\prime}\neq t_{e^\prime}r$ and
$rt_e\neq t_er$; otherwise we would be in Case 1 or 2,
respectively. This implies that either $rt_{e^\prime}r =
t_e$ or $\{t_e, rt_{e^\prime} r\} = \{(i,b),(j,a)\}$. The latter
case, though, leads to $u\star r = u\star
rrt_{e^\prime}r = u\star t_{e^\prime}r$ implying the contradiction $u
= u\star t_{e^\prime}$. Thus, $rt_{e^\prime}r =
t_e$, and therefore $\{t_e, t_{e^\prime}\} = \{(i,k),(j,k)\}$ for some $k\not
\in \{i,j,a,b\}$. Let us suppose $t_e=(i,k)$; the other case is
completely similar. A small computation shows that 
\[
\begin{split}
u &=           (i,a)(j,b)(k,c)\cdots,\\
u\star r &=    (i,b)(j,a)(k,c)\cdots,\\
u\star t_e &=  (i,c)(j,b)(k,a)\cdots,\\
u\star rt_e &= (i,c)(j,a)(k,b)\cdots,\\
u\star t_er &= (i,b)(j,c)(k,a)\cdots,\\
u\star rt_er &= (i,a)(j,c)(k,b)\cdots, 
\end{split}
\] 
where we have written down the $2$-cycle decompositions of the various
elements (the dots indicate the remaining cycles; they are equal for
all six elements). In particular, the elements are all distinct, so
$|u\star \langle r, t_e\rangle|=6$. 

Now observe that precisely five of the elements in $u\star \langle
r,t_e\rangle$ belong to $\bg(w)$; the one which does not is $u\star
rt_e=u\star t_{e^\prime}r$, because $\tau_{e^\prime}\neq
rt_{e^\prime}r$. A contradiction is now provided by Lemma
\ref{le:top_element} below.
\end{proof}
\end{lemma}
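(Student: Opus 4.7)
The plan is to assume $\epsilon(e) = \epsilon(e')$ for distinct $e, e' \in \out(u)$ and derive a contradiction, casing on the shape of $\tau_e$ and $\tau_{e'}$ in Definition \ref{de:epsilon}. Since $u \star r$ is a common endpoint of both $\epsilon(e)$ and $\epsilon(e')$, the assumption collapses to $u \star r \tau_e = u \star r \tau_{e'}$, which I would analyse in three regimes according to whether each $\tau$ is ``rotated'' $r t_{\cdot} r$ or ``plain'' $t_{\cdot}$.

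The two parallel regimes should be nearly immediate. When both $\tau$'s are rotated, cancelling $r$ produces $u \star t_e = u \star t_{e'}$, contradicting $e \neq e'$. When both are plain and $r$ commutes with both transpositions, the same conclusion follows; if $r$ fails to commute with one of them, the compatibility of $t_e$ and $t_{e'}$ restricts $\{t_e, t_{e'}\}$ to a short list of pairs, each ruled out by a direct cycle computation (giving either $u \star t_e = u \star t_{e'}$ or $u = u \star t_e$).

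The substance lies in the mixed regime, say $\tau_e = r t_e r$ and $\tau_{e'} = t_{e'}$, so $u \star t_e r \preceq w$ while $u \star t_{e'} r \not\preceq w$ by Definition \ref{de:epsilon}. The equation $\epsilon(e) = \epsilon(e')$ now reads $u \star t_e = u \star r t_{e'} r$; after discarding sub-cases that reduce to the previous regimes or give immediate contradictions, compatibility and a short computation force $r t_{e'} r = t_e$, hence $\{t_e, t_{e'}\} = \{(i,k),(j,k)\}$ for some $k \notin \{i,j,a,b\}$. I would then write out explicit $2$-cycle decompositions of the six elements of the orbit $u \star \langle r, t_e \rangle$ (in terms of $c = u(k)$) to confirm they are pairwise distinct, and invoke the hypotheses on $\tau_e, \tau_{e'}$ to see that exactly five of them lie in $I_w$, the exception being $u \star rt_e$. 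The main obstacle will be closing the argument: I expect to isolate a separate ``top element'' lemma asserting that the orbit $u \star \langle r, t_e\rangle$ has a unique $\preceq$-maximum, whence the single missing orbit element must coincide with that maximum. Since the $\preceq$-maximum is readily identified from the $2$-cycle decompositions in each of the three possible relative orderings of $\{i, j, k\}$, and in every case is distinct from $u \star rt_e$, the desired contradiction follows.
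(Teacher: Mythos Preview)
Your handling of Cases 1 and 2 and the reduction in Case 3 down to $\{t_e,t_{e'}\}=\{(i,k),(j,k)\}$ matches the paper exactly. The gap is in how you close Case 3.

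Your proposed ``top element'' lemma (the orbit $u\star\langle r,t_e\rangle$ has a unique $\preceq$-maximum) is true, and your deduction that the missing element must equal that maximum is valid. But your next claim---that the maximum is \emph{always} distinct from $u\star rt_e$---is false. The maximum of the orbit depends on the relative order of $\{a,b,c\}$ as well as that of $\{i,j,k\}$: it is the matching that sends the sorted triple $(x_1,x_2,x_3)$ to the sorted triple $(a_1,a_2,a_3)$. When $i<j<k$ and $c<a<b$ (which nothing in the hypotheses excludes), the maximum is $(i,c)(j,a)(k,b)\cdots = u\star rt_e$, precisely the element you need it \emph{not} to be. In that configuration your argument yields no contradiction at all.

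What the paper actually proves in Lemma~\ref{le:top_element} is stronger than ``unique maximum'': it shows that if at least five of the six orbit elements lie in $I_w$ then all six do. The mechanism is a join-type inequality via the standard criterion, namely
\[
[123]^+_{(\alpha,\beta)}=\min\bigl([213]^+_{(\alpha,\beta)},\,[132]^+_{(\alpha,\beta)}\bigr)
\]
for all $(\alpha,\beta)$ (and symmetrically for the $-$ part), so that $w\succeq[213]$ and $w\succeq[132]$ together force $w\succeq[123]$. This is what rules out the bad configuration above: since $u\star t_e$ and $u\star rt_er$ (which play the roles of $[132]$ and $[213]$) are both in $I_w$, the maximum $u\star rt_e$ is forced into $I_w$ as well, contradicting $\tau_{e'}=t_{e'}$. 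Your lemma needs to be upgraded to this ``five implies six'' statement; the unique-maximum version is not enough.
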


\begin{lemma} \label{le:top_element}
Suppose $|u\star \langle t_1, t_2\rangle\cap I_w|\ge 5$ for two
elements $u,w\in \fix$ and some
transpositions $t_1, t_2$. Then, $|u\star \langle t_1, t_2\rangle\cap
I_w| = 6$, i.e.\ $u\star \langle t_1, t_2\rangle \subseteq I_w$.
\begin{proof}
The set of transpositions in the dihedral subgroup $\langle
t_1,t_2\rangle$ is $\{t_1, t_2, t_1t_2t_1\} = \{(x_1,
x_2),(x_2,x_3),(x_1,x_3)\}$ for some $1\le x_1<x_2<x_3\le 2n$. There
are elements $1\le a_1<a_2<a_3\le 2n$, with $x_i\neq a_j$ for all $i$
and $j$, such that $u\star \langle t_1, t_2\rangle$ consists of the
six involutions with cycle decomposition of the form
\[
(x_1, a_{i_1})(x_2, a_{i_2})(x_3, a_{i_3})\cdots,
\]
dots denoting the $2$-cycles in $u$ with support disjoint
from $\{x_1, x_2, x_3, a_1, a_2, a_3\}$.

In order to simplify notation, let
\[
[i_1i_2i_3] = (x_1, a_{i_1})(x_2, a_{i_2})(x_3, a_{i_3})\cdots.
\]
Since $[123]$ is the maximum element in $u\star \langle t_1,
t_2\rangle$, it suffices to show that $w\succeq [123]$ whenever $w\succeq
[213]$ and $w\succeq [132]$. To this end, consider the standard
criterion. For $1\le \alpha,\beta\le 2n$, let
\[
\begin{split}
[i_1i_2i_3]_{(\alpha,\beta)}^+ &= |\{j\in \{1,2,3\}\mid x_j\le
\alpha\text{ and }a_{i_j}\ge \beta \},\\
[i_1i_2i_3]_{(\alpha,\beta)}^- &= |\{j\in \{1,2,3\}\mid a_{i_j}\le \alpha\text{ and
}x_i\ge \beta \}.
\end{split}
\] 
Then, the number of dots weakly northwest of $(\alpha, \beta)$ in the
diagram of $[i_1i_2i_3]$ is
\[
[i_1i_2i_3]_{(\alpha, \beta)}=[i_1i_2i_3]_{(\alpha,\beta)}^+ +
[i_1i_2i_3]_{(\alpha,\beta)}^- + D,
\]
where $D$ counts dots with coordinates outside
$\{x_1,x_2,x_3,a_1,a_2,a_3\}$; this number is independent of $i_1,
i_2, i_3$.

By the symmetry between $x$ and $a$, it is sufficient to show
\[
[123]_{(\alpha,\beta)}^+ = \min \left([213]_{(\alpha,\beta)}^+,[132]_{(\alpha,\beta)}^+\right)
\]
for all $\alpha, \beta$. This statement, however, follows immediately from
the observation that for all $m$, the first $m$ letters in the string
``$123$'' are the same as the first $m$ letters in one of the strings
``$213$'' and ``$132$''.
\end{proof}
\end{lemma}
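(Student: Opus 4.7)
The plan is to exploit the precise combinatorial structure of the six-element orbit $u\star \langle t_1, t_2\rangle$. Because the orbit has size six, $\langle t_1, t_2\rangle$ must be a dihedral group of order $6$, so its three transpositions supported on a three-element set $\{x_1, x_2, x_3\}$ with $x_1 < x_2 < x_3$; and since no pair among $\{x_1, x_2, x_3\}$ is a $2$-cycle of $u$, the involution $u$ pairs $x_1, x_2, x_3$ with three distinct integers $a_1 < a_2 < a_3$ outside $\{x_1, x_2, x_3\}$. The orbit then consists exactly of the involutions $[i_1 i_2 i_3]$ pairing $x_j$ with $a_{i_j}$ (and agreeing with $u$ elsewhere), as $(i_1, i_2, i_3)$ runs through the permutations of $(1, 2, 3)$. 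A direct inversion count identifies $[123]$ as the Bruhat minimum of the orbit, hence as the unique $\preceq$-maximum. Transitivity of $\preceq$ then reduces everything to the case $[123]\not\preceq w$: if $[123]\preceq w$, the whole orbit is in $I_w$ automatically; otherwise the hypothesis $|u\star \langle t_1, t_2\rangle\cap I_w|\ge 5$ forces the other five elements — in particular $[213]$ and $[132]$ — to lie in $I_w$. The whole lemma therefore reduces to the sub-claim that $[213]\preceq w$ and $[132]\preceq w$ jointly imply $[123]\preceq w$.

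To establish the sub-claim I would apply the standard criterion and work with the decomposition
\[
[i_1 i_2 i_3]_{(\alpha,\beta)} \;=\; [i_1 i_2 i_3]^+_{(\alpha,\beta)} \;+\; [i_1 i_2 i_3]^-_{(\alpha,\beta)} \;+\; D,
\]
where $[i_1 i_2 i_3]^+_{(\alpha,\beta)} = |\{j : x_j\le \alpha,\ a_{i_j}\ge \beta\}|$, $[i_1 i_2 i_3]^-_{(\alpha,\beta)}$ is obtained by swapping the roles of the $x$'s and $a$'s, and $D$ counts the remaining dots of $u$ outside $\{x_1, x_2, x_3, a_1, a_2, a_3\}$ and is independent of the pairing. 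It suffices to show $[123]_{(\alpha,\beta)} \ge \min([213]_{(\alpha,\beta)}, [132]_{(\alpha,\beta)})$ at each lattice point $(\alpha, \beta)$. The key combinatorial observation is that for every $m \in \{0,1,2,3\}$, the set of first $m$ letters of the string ``$123$'' agrees with the first $m$ letters of at least one of ``$213$'' and ``$132$''. Setting $m = |\{j : x_j \le \alpha\}|$ and noting that $[i_1 i_2 i_3]^+_{(\alpha,\beta)}$ depends on the pairing only through this set, one gets $[123]^+_{(\alpha,\beta)} \in \{[213]^+_{(\alpha,\beta)},[132]^+_{(\alpha,\beta)}\}$ immediately; by the symmetry swapping $x$'s and $a$'s, the analogous statement holds for $^-$.

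The main obstacle is to combine the $^+$ and $^-$ statements into a bound on the full count. A priori a ``mixed'' configuration could occur at some $(\alpha, \beta)$, where the $^+$-part of $[123]$ strictly agrees with one of $[213]^+,[132]^+$ while the $^-$-part strictly agrees with the other; in such a case $[123]_{(\alpha,\beta)}$ could drop strictly below $\min([213]_{(\alpha,\beta)},[132]_{(\alpha,\beta)})$. I would rule this out by a short inspection: a strict inequality $[123]^+_{(\alpha,\beta)}<[132]^+_{(\alpha,\beta)}$ forces $a_2<\beta\le a_3$ together with $x_1,x_2\le \alpha<x_3$, and a strict inequality $[123]^-_{(\alpha,\beta)}<[213]^-_{(\alpha,\beta)}$ forces $x_1<\beta\le x_2$ together with $a_1\le \alpha<a_2$; the two together demand $\beta>\alpha$ and $\beta\le \alpha$ simultaneously, which is impossible, and the other mixed configuration is excluded symmetrically. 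Hence at each $(\alpha,\beta)$ the full count $[123]_{(\alpha,\beta)}$ in fact equals one of $[213]_{(\alpha,\beta)}, [132]_{(\alpha,\beta)}$, which gives the desired inequality and completes the proof.
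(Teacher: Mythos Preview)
Your proof is correct and follows the paper's approach step for step: the same parametrisation of the orbit by $[i_1i_2i_3]$, the same reduction to showing that $[213],[132]\preceq w$ forces $[123]\preceq w$, and the same decomposition of the standard-criterion count into $^+$, $^-$ and $D$ parts. The paper simply asserts that, ``by the symmetry between $x$ and $a$'', proving $[123]^+_{(\alpha,\beta)}=\min\bigl([213]^+_{(\alpha,\beta)},[132]^+_{(\alpha,\beta)}\bigr)$ suffices; you go further and explicitly rule out the ``mixed'' configuration (where the $^+$ minimum is attained at one of $[213],[132]$ and the $^-$ minimum at the other), which is genuinely needed to pass from the separate $^+$ and $^-$ statements to the full inequality $[123]_{(\alpha,\beta)}\ge\min\bigl([213]_{(\alpha,\beta)},[132]_{(\alpha,\beta)}\bigr)$. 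Your version is thus a more careful rendering of the same argument, and your contradiction $\alpha<\beta\le\alpha$ for the mixed case is correct.
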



\begin{thebibliography}{AB}
\bibitem{BL} S.\ C.\ Billey and V.\ Lakshmibai, {\em Singular loci of
  Schubert varieties}, Progress in Mathematics {\bf 182}, Birkh\"auser
  Boston, Inc., Boston, MA, 2000.

\bibitem{BW} S.\ C.\ Billey and G.\ S.\ Warrington, Maximal singular loci
  of Schubert varieties in ${\mathrm SL}(n)/B$, {\em Trans.\ Amer.\
  Math.\ Soc.\ }{\bf 355} (2003), 3915--3945.

\bibitem{BB} A.\ Bj\"orner and F.\ Brenti, {\em Combinatorics of
  Coxeter groups}, Graduate Texts in Mathematics, Vol.\ 231,
  Springer-Verlag, New York, 2005.

\bibitem{BG} B.\ D.\ Boe and W.\ Graham, A lookup conjecture for
  rational smoothness, {\em Amer.\ J.\ Math.\ }{\bf 125} (2003), 317--356.

\bibitem{brion} M.\ Brion, Rational smoothness and fixed points of
  torus actions, {\em Transform.\ Groups} {\bf 4} (1999), 127--156.

\bibitem{CP} J.\ B.\ Carrell, The Bruhat graph of a Coxeter group, a
  conjecture of Deodhar, and rational smoothness of Schubert
  varieties, {\em Proc.\ Sympos.\ Pure Math.\ }{\bf 56} (1994), 53--61.

\bibitem{deodhar} V.\ V.\ Deodhar, Local Poincar\'e duality and
  nonsingularity of Schubert varieties, {\em Comm.\ Algebra} {\bf 13}
  (1985), 1379--1388.

\bibitem{dyer} M.\ Dyer, On the ``Bruhat graph'' of a Coxeter system,
  {\em Comp.\ Math.\ }{\bf 78} (1991), 185--191.

\bibitem{helgason} S.\ Helgason, {\em Differential geometry, Lie
  groups, and symmetric spaces}, Pure and Applied Mathematics {\bf 80}, Academic Press, New York, 1978.

\bibitem{hultman} A.\ Hultman, Twisted identities in Coxeter groups,
  {\em J.\ Algebraic Combin.\ }{\bf 28} (2008), 313--332.

\bibitem{KL} D.\ Kazhdan and G.\ Lusztig, Representations of Coxeter
  groups and Hecke algebras, {\em Invent.\ Math.\ }{\bf 53} (1979), 165--184.

\bibitem{LV} G.\ Lusztig and D.\ A.\ Vogan, Jr., Singularities of
  closures of $K$-orbits on flag manifolds, {\em Invent.\ Math.\ }{\bf
  71} (1983), 365--379.

\bibitem{mcgovern} W.\ M.\ McGovern, Closures of $K$-orbits in the
  flag variety for $SU^*(2n)$, preprint 2009.

\bibitem{richardson} R.\ W.\ Richardson, Orbits, invariants and
  representations associated to involutions of reductive groups, {\em
  Invent.\ Math.\ }{\bf 66} (1982), 287--312.

\bibitem{RS} R.\ W.\ Richardson and T.\ A.\ Springer, The Bruhat order
 on symmetric varieties, {\em Geom.\ Dedicata} {\bf 35} (1990),
  389--436.

\bibitem{RS2} R.\ W.\ Richardson and T.\ A.\ Springer, Complements to:
  The Bruhat order on symmetric varieties, {\em Geom.\ Dedicata} {\bf
  49} (1994), 231--238.

\bibitem{springer1} T.\ A.\ Springer, Some results on algebraic groups with involutions, {\em Advanced Studies in Pure Math.\ }{\bf 6}, 525--543, Kinokuniya/North-Holland, 1985.

\bibitem{springer2} T.\ A.\ Springer, A combinatorial result on
  $K$-orbits on a flag manifold, {\em The Sophus Lie Memorial
  Conference (Oslo, 1992), Scand.\ Univ.\ Press, Oslo}, 1994, 363--370.

\bibitem{stanley} R.\ P.\ Stanley, {\em Enumerative combinatorics},
  vol.\ 1, Cambridge Univ.\ Press, 1997.

\bibitem{steinberg} R.\ Steinberg, Endomorphisms of linear algebraic
  groups, {\em Mem.\ Amer.\ Math.\ Soc.\ }{\bf 80} (1968), 1--108.

\bibitem{vogan} D.\ A.\ Vogan, Jr., Irreducible characters of
  semisimple Lie groups III. Proof of Kazhdan-Lusztig conjecture in
the integral case, {\em Invent.\ Math.\ }{\bf 71} (1983), 381--417.

\end{thebibliography}
\end{document}